\documentclass [smallextended]{svjour3}[]

\usepackage{appendix}
\usepackage[utf8]{inputenc}
\usepackage[T1]{fontenc}
\usepackage{amsmath,amsfonts,amssymb}
\usepackage{algorithm}
\usepackage[noend]{algpseudocode}
\usepackage{graphicx}
\usepackage{caption}
\usepackage{subcaption}
\usepackage{diagbox}
\usepackage{url}
\usepackage{multicol}
\usepackage{xspace}
\usepackage{pdfpages}
\usepackage{bm}
\usepackage{mathabx}

\usepackage[citestyle=numeric,natbib=true,backend=bibtex]{biblatex}

 \addbibresource{biblio}

\usepackage{tikz}
\usetikzlibrary{shapes}
\usetikzlibrary{trees}
\usetikzlibrary{arrows}
\usetikzlibrary{patterns}

\newcommand{\milp}{mixed integer linear program\xspace}
\newcommand{\milps}{mixed integer linear programs\xspace}

\algrenewcommand\algorithmicrequire{\textbf{Input:}}
\algrenewcommand\algorithmicensure{\textbf{Output:}}

\begin{document}


\title{On the integration of Dantzig-Wolfe and Fenchel decompositions via directional normalizations}

\author{François Lamothe \and Alain Haït \and Emmanuel Rachelson \and Claudio Contardo \and Bernard Gendron}

\institute{François Lamothe \and Alain Haït \and Emmanuel Rachelson
\at ISAE-SUPAERO, Université de Toulouse \\\email{francois.lamothe@isae.fr}
\and
Claudio Contardo
\at Concordia University, Canada
\and
Bernard Gendron
\at Université de Montréal, Canada
}

\maketitle
        
\begin{abstract}
The strengthening of linear relaxations and bounds of mixed integer linear programs has been an active research topic for decades. Enumeration-based methods for integer programming like linear programming-based branch-and-bound exploit strong dual bounds to fathom unpromising regions of the feasible space. In this paper, we consider the strengthening of linear programs via a composite of Dantzig-Wolfe and Fenchel decompositions. We provide geometric interpretations of these two classical methods. Motivated by these geometric interpretations, we introduce a novel approach for solving Fenchel sub-problems and introduce a novel decomposition combining Dantzig-Wolfe and Fenchel decompositions in an original manner. We carry out an extensive computational campaign assessing the performance of the novel decomposition on the unsplittable flow problem. Very promising results are obtained when the new approach is compared to classical decomposition methods.

\keywords{Mixed Integer Linear Programming \and Decomposition methods \and Unsplittable flows \and Dantzig-Wolfe decomposition \and Fenchel decomposition}
\end{abstract}

\section*{Acknowledgments}
This document is the result of a research project funded by the \textit{Centre national d'études spatiales} (CNES) and Thales Alenia Space. C. Contardo and B. Gendron thank the Natural Sciences and Engineering Research Council of Canada (NSERC) for its financial support under Discovery Grants no. 2020-06311 and 2017-06054.

\section{Introduction}

Enumeration-based algorithms are arguably the main algorithmic frameworks to solve mixed-integer linear programs among which the branch-and-bound (B\&B) method \citep{land1960automatic} is perhaps the most efficient and versatile one. It relies on the ability to compute primal and dual bounds on the value of the solutions to the problem at hand. The method is most efficient when quick computing of tight primal and dual bounds is available, resulting in short enumerations. Traditionally, primal bounds are found via heuristics and dual bounds via solving relaxations of the problem. The most classical such relaxation is the so-called \textit{linear relaxation} in which the integrality constraints are ignored. The resulting problem, an ordinary linear program, can be solved efficiently either in polynomial time via interior point methods \citep{KHACHIYAN198053, mehrotra1992implementation}, or via a greedy method --- namely the simplex method \citep{dantzig1951maximization} --- of exponential worst-case complexity, but very efficient in practice \citep{borgwardt2012simplex}.

A common practice to improve the linear relaxation of mixed integer linear programs is to apply a decomposition method to a subset of the problem's constraints whose associated polyhedron does not have the integrality property (some vertices of the polyhedron have non-integer components). The decomposition tightens the polyhedron which in turn strengthens the linear relaxation of the problem. One of the most classical decomposition methods is the one by \citet{dantzig1960decomposition}, known as the Dantzig-Wolfe decomposition. It has proven successful in various applications which explains the attention it has received over the years \citep{desaulniers2006column}. The Dantzig-Wolfe decomposition is closely related to the Lagrangian decomposition which has also been successful in many practical applications. The main difference between the Lagrangian and Dantzig-Wolfe decomposition is that in the former only dual information is extracted and exploited in a sub-gradient algorithm whereas the latter also extracts primal information which can be embedded within an enumeration scheme, usually referred to as \textit{branch-and-price} \citep{barnhart1998branch}. Because in this work we will consider column generation and cutting plane procedures, we will focus on the Dantzig-Wolfe decomposition.

Our analysis restricts to decompositions that are able to exploit integer subproblems ---as opposed to pure linear problems---, and for that reason, we do not consider Benders decomposition \citep{benders1962partitioning}. Fenchel decomposition, on the other hand, is a cutting plane method similar to Benders decomposition that can be applied when the sub-problem is integer. It is thus able to improve the relaxation quality of \milps. This technique has had success in several applications such as knapsack problems \citep{boyd1993generating, kaparis2010separation}, generalized assignment problems \citep{avella2010computational}, network design problems with unsplittable flow \citep{chen2021exact} or even stochastic optimization problems \citep{ntaimo2013fenchel, beier2015stage}. 

Although Fenchel and Dantzig-Wolfe decompositions have been largely studied in the context of linear relaxation strengthening, several limitations of both decomposition methods have been identified in the literature which may affect their convergence. In particular, the Dantzig-Wolfe decomposition is known to suffer from the degeneracy of its master problem (which happens when the master problem admits several optimal dual solutions). This explains why a considerable effort has been made by the scientific community to improve these decomposition methods and overcome their weaknesses. This work is in this line of contribution. We show that Dantzig-Wolfe and Fenchel decompositions can be seen in a similar light as constructing inner and outer approximations of the polyhedron being decomposed and that the synergy between the two approximations may be beneficial for the entire approach when applied to some large-scale mixed-integer linear programs.

We outline the main contributions of our work as follows: 
\begin{itemize}
    \item We provide geometric interpretations of Dantzig-Wolfe and Fenchel decompositions which may fuel complementary insights on these two approaches compared to purely analytical interpretations.
    \item We provide a critical overview of normalizations at the core of the separation problems arising in Fenchel decomposition. The normalization impacts the type of cut, its properties, and ultimately the convergence speed of the decomposition method. We provide geometric interpretations of several types of normalization and their properties.
    \item We introduce a novel approach to the Fenchel sub-problem when a directional normalization is used. The proposed method possesses reduces the numerical instabilities of a direct resolution approach commonly used. We show that the new approach solves the Fenchel sub-problem in finitely many iterations.
    \item We introduce a new decomposition method inspired by both the Dantzig-Wolfe and the Fenchel decompositions. The proposed method uses a Dantzig-Wolfe master problem and a Fenchel master problem. A Fenchel sub-problem guided with a directional normalization is used to coordinate the two master problems. The resulting method is shown to perform especially well on instances presenting high degrees of degeneracy. We provide a possible explanation of this phenomenon based on our findings.
\end{itemize}

In the following, we start by giving in Section \ref{sec:context} the context in which we apply the various decomposition methods together with some notation. Then, we present in Section \ref{geometric_interpretation} a geometric interpretation of Dantzig-Wolfe and Fenchel decomposition methods. Section \ref{Fenchel_sp_normalization} is dedicated to an overview of the resolution of the Fenchel subproblem and in particular, the impact of the normalization used. This concept will be used in the next section for the new methods introduced in the paper. We then present in Section \ref{iterative_sub_problem} a new approach to solving the Fenchel subproblem. In Section \ref{hybrid_DW_Fenchel}, we introduce the new decomposition method which integrates both Dantzig-Wolfe and Fenchel master problems as well as a Fenchel subproblem. In Section \ref{sec:application}, we illustrate our method by applying it to the unsplittable flow problem. An experimental evaluation of all proposed methods is then made in Section \ref{sec:results} on small and medium-sized instances of the problem.

\section{Context}
\label{sec:context}

In the remainder of this paper, we will apply decomposition methods to the following general \milp:
\begin{subequations}
\begin{alignat}{3}
(P) \quad &\max_{x} && c x  \\
&\text{subject to} \quad && A_1 x \leq b_1 \\
& && A_2 x \leq b_2 \\
& && x \in X
\end{alignat}
\end{subequations}
where $ X $ is a product set $ \mathbb{R}^p \times \mathbb{Z}^q $ of appropriate dimension whose linear relaxation $\mathbb{R}^{p+q}$ will be noted $ \bar{X} $. Moreover, to explain the methods used, we will consider the following polyhedra which are assumed to be bounded to simplify the explanations:
\begin{subequations}
\begin{alignat*}{2}
& LR_1 = \{x \in \bar{X} | A_1 x \leq b_1 \}  \\
& LR_2 = \{x \in \bar{X} | A_2 x \leq b_2 \}  \\
& Q_2 = conv(\{x \in X | A_2 x \leq b_2 \})
\end{alignat*}
\end{subequations}
In order to bound from above the value of the optimal solution of a general \milp as ($P$) one can optimize its objective function over a relaxation containing the set of valid solutions. Usually, the linear relaxation of the set of solutions $ LR_1 \cap LR_2 $ is used. However, one might want a tighter relaxation to obtain a better upper bound. This can be obtained by using the relaxation $LR_1 \cap Q_2$ which still contains the set of valid solutions while being included in $ LR_1 \cap LR_2 $. As illustrated in Figure \ref{polyedron}, this relaxation usually returns strictly better bounds than the linear relaxation when the polyhedron $Q_2$ is strictly included in $LR_2$ which happens when the polyhedron $LR_2$ does not have the integrality property (some of the vertices of the polyhedron have non-integer coordinates). However, the drawback of $Q_2$ compared to $LR_2$ is that one usually only has a representation of $Q_2$ with an exponential number of variables or constraints which is not manageable directly by a linear programming solver. In compensation, we assume to have at our disposal an efficient algorithm to optimize a linear function on the polyhedron $Q_2$. This algorithm is called the optimization oracle and solves the following \milp:
\begin{subequations}
\begin{alignat*}{3}
(O) \quad &\max_{x}      &&\pi x \\
&\text{subject to} \quad && A_2 x \leq b_2 \\
&                        && x \in X
\end{alignat*}
\end{subequations}
where $ x \mapsto \pi x $ is any linear function. Thus, to be able to optimize over $ LR_1 \cap Q_2 $, the goal of a decomposition method is to use this oracle to compute an approximation of $Q_2$ of manageable size. In particular, the Dantzig-Wolfe decomposition iteratively grows an inner approximation of $Q_2$ while the Fenchel decomposition iteratively refines an outer approximation of $Q_2$.

\begin{figure}[ht]
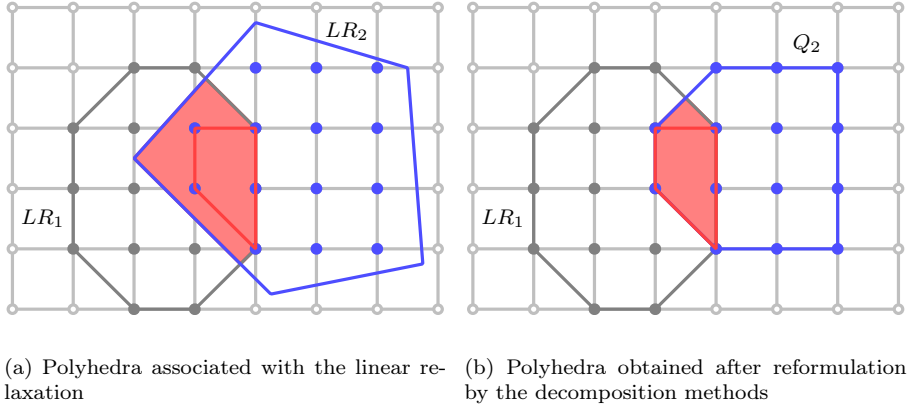

     \centering
     \begin{subfigure}[b]{0.49 \columnwidth}
         \centering
         \begin{tikzpicture}[scale=0.8]
\input{figure_tikz/tikz_settings}
\input{figure_tikz/Components/grid_no_arrow}
\input{figure_tikz/Components/fill_intersection}
\input{figure_tikz/Components/poly_flots}
\input{figure_tikz/Components/poly_capa}
\input{figure_tikz/Components/poly_integer_sol}

\node at (1.5,2.5) {\small $LR_1$};
\node at (6.5,5.6) {\small $LR_2$};
\end{tikzpicture}
         \caption{Polyhedra associated with the linear relaxation}
     \end{subfigure}
     \hfill
     \begin{subfigure}[b]{0.49 \columnwidth}
         \centering
         \begin{tikzpicture}[scale=0.8]
\input{figure_tikz/tikz_settings}
\input{figure_tikz/Components/grid_no_arrow}
\input{figure_tikz/Components/fill_approx}
\input{figure_tikz/Components/poly_flots}
\input{figure_tikz/Components/poly_approx}
\input{figure_tikz/Components/poly_integer_sol}

\node at (1.5,2.5) {\small $LR_1$};
\node at (6.5,5.4) {\small $Q_2$};
\end{tikzpicture}
         \caption{Polyhedra obtained after reformulation by the decomposition methods}
     \end{subfigure}
    \caption{Representation of solution spaces of the linear relaxation and the models obtained from the decomposition methods}
    \label{polyedron}
\end{figure}

Decomposition methods are known to work very well when the matrix $A_2$ is block diagonal because it enables decomposing the problem of optimizing over $Q_2$ into several smaller problems (one per block of  the matrix).  For instance, in the case of the application considered in this study, the unsplittable flow problem, the capacity constraints lead to a block diagonal structure. In this context, the constraints $A_1 x \leq b_1$ would correspond to the flow conservation constraints while the constraints $A_2 x \leq b_2$ would correspond to the capacity constraints. However, our work makes use of a different rationale. Let us assume that the matrix $A_2$ is sparse with non-zero elements for only a few variables; this may happen for instance when $A_2$ corresponds to one of the blocks of a block diagonal matrix. With this choice, it is possible to apply decomposition methods even in contexts where the main problem ($P$) does not have any block diagonal structure. For clarity purposes, we will present the methods as if we were decomposing only one polyhedron at a time. However, in practice, one would decompose several polyhedra at the same time; for example, all the blocks of a block diagonal matrix.

\section{Geometric interpretation of Dantzig-Wolfe and Fenchel decompositions}
\label{geometric_interpretation}

In this section, we provide a geometric interpretation of both decomposition methods with the potential to fuel new intuitions regarding their strengths and weaknesses.

\subsection{Dantzig-Wolfe decomposition}
\label{sec:presentation-DW}
Instead of optimizing the objective function of \milp (P) over the solution set $LR_1 \cap LR_2$ of its linear relaxation, Dantzig-Wolfe decomposition allows for the optimization over the smaller set $LR_1 \cap Q_2$. Because one does not usually have a manageable description of the set $Q_2$, it is necessary to compute an approximation of $Q_2$ in order to optimize over the intersection $LR_1 \cap Q_2$. To compute this approximation, we assume the availability of an optimization oracle over $Q_2$. The main idea in the Dantzig-Wolfe decomposition is to iteratively grow an inner approximation of $Q_2$. One can create such an approximation by obtaining a set of points $x_i$ belonging to $Q_2$ (usually extreme points of $Q_2$) and setting the approximation as the convex envelope of these points. We will denote this inner approximation with $\widehat{Q}_2$. The Dantzig-Wolfe decomposition illustrated in Figure \ref{DW_illustration} proceeds as follows:
\begin{enumerate}
    \item Initialize the approximation $\widehat{Q}_2$ with points of $Q_2$
    \item Find the optimal solution $\widehat{x}$ over $LR_1 \cap \widehat{Q}_2$
    \item Search for a point of $Q_2$ whose addition to $\widehat{Q}_2$ may improve the value of the optimal solution $\widehat{x}$
    \item If such a point exist, add it to $\widehat{Q}_2$ and go to Step 2.
    \item Else: $\widehat{x}$ is the optimal solution over $LR_1 \cap Q_2$. Stop.
\end{enumerate}

\subsubsection{Optimizing over $LR_1 \cap \widehat{Q}_{2}$} To optimize over $LR_1 \cap \widehat{Q}_2$, a master linear program is created in which the condition $x \in \widehat{Q}_2$ must be enforced. This can be done by rewriting $x$ as a convex combination of its extreme points $x_i$. To that end, a variable $ \lambda_i $ is introduced for each point $ x_i $ used to create $\widehat{Q}_2$. This variable represents the weight of the vertex $ x_i $ in the convex combination. The Dantzig-Wolfe master linear program can then be written as follows:
\begin{subequations}
\begin{alignat*}{3}
(DW) \quad &\max_{x, \lambda_i} && c^{T}x  \\
&\text{subject to} \quad && A_1 x \leq b_1 \\
&&& x = \sum_{i \in I} \lambda_i x_i\\
&&& \sum_{i \in I} \lambda_i = 1\\
&&& x \in \bar{X}, ~ \lambda_i \in \mathbb{R}^+, \quad \forall i \in I
\end{alignat*}
\end{subequations}

\begin{figure}[!ht]
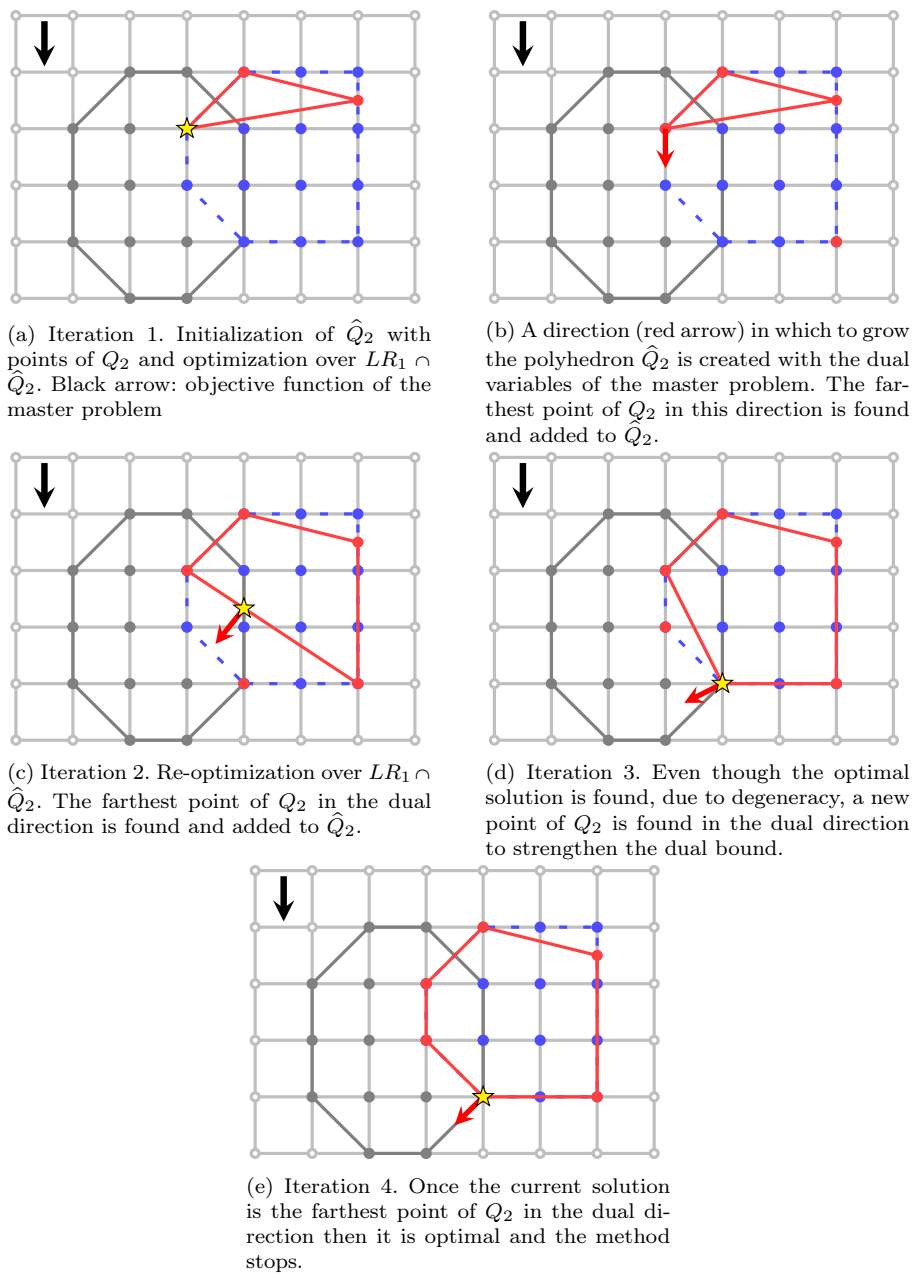

     \centering
     \begin{subfigure}[t]{0.47 \columnwidth}
         \centering
         \begin{tikzpicture}[scale=0.75]
\input{figure_tikz/tikz_settings}
\input{figure_tikz/Components/grid}
\input{figure_tikz/Components/poly_flots}
\input{figure_tikz/Components/poly_approx_dashed}
\input{figure_tikz/Components/poly_DW_1}

\node[star, star points=5, draw, star point ratio=2.5, scale=0.4, fill=yellow] () at (4,4){};
\end{tikzpicture}
         \caption{Iteration 1. Initialization of $\widehat{Q}_2$ with points of $Q_2$ and optimization over $LR_1 \cap \widehat{Q}_2$. Black arrow: objective function of the master problem}
     \end{subfigure}
     \hfill
     \begin{subfigure}[t]{0.47 \columnwidth}
         \centering
         \begin{tikzpicture}[scale=0.75]
\input{figure_tikz/tikz_settings}
\input{figure_tikz/Components/grid}
\input{figure_tikz/Components/poly_flots}
\input{figure_tikz/Components/poly_approx_dashed}
\input{figure_tikz/Components/poly_DW_1}

\draw[-stealth, red, line width=0.7mm] (4, 4) -- (4, 3.3);
\draw[my-red, fill=my-red, very thick] (7,2) circle(0.07cm);
\end{tikzpicture}
         \caption{A direction (red arrow) in which to grow the polyhedron $ \widehat{Q}_2$ is created with the dual variables of the master problem. The farthest point of $Q_2$ in this direction is found and added to $ \widehat{Q}_2$.}
     \end{subfigure}
     \begin{subfigure}[t]{0.47 \columnwidth}
         \centering
         \begin{tikzpicture}[scale=0.75]
\input{figure_tikz/tikz_settings}
\input{figure_tikz/Components/grid}
\input{figure_tikz/Components/poly_flots}
\input{figure_tikz/Components/poly_approx_dashed}
\input{figure_tikz/Components/poly_DW_2}

\draw[-stealth, red, line width=0.7mm] (5, 3.33) -- (4.5, 2.7);
\node[star, star points=5, draw, star point ratio=2.5, scale=0.4, fill=yellow] () at (5,3.33){};
\draw[my-red, fill=my-red, very thick] (5,2) circle(0.07cm);
\end{tikzpicture}
         \caption{Iteration 2. Re-optimization over $LR_1 \cap \widehat{Q}_2$. The farthest point of $Q_2$ in the dual direction is found and added to $ \widehat{Q}_2$.}
     \end{subfigure}
     \hfill
     \begin{subfigure}[t]{0.47 \columnwidth}
         \centering
         \begin{tikzpicture}[scale=0.75]
\input{figure_tikz/tikz_settings}
\input{figure_tikz/Components/grid}
\input{figure_tikz/Components/poly_flots}
\input{figure_tikz/Components/poly_approx_dashed}
\input{figure_tikz/Components/poly_DW_3}

\draw[-stealth, red, line width=0.7mm] (5,2) -- (4.33, 1.66);
\node[star, star points=5, draw, star point ratio=2.5, scale=0.4, fill=yellow] () at (5,2){};
\draw[my-red, fill=my-red, very thick] (4,3) circle(0.07cm);
\end{tikzpicture}
         \caption{Iteration 3. Even though the optimal solution is found, due to degeneracy, a new point of $Q_2$ is found in the dual direction to strengthen the dual bound.}
     \end{subfigure}
     \begin{subfigure}[t]{0.47 \columnwidth}
         \centering
         \begin{tikzpicture}[scale=0.75]
\input{figure_tikz/tikz_settings}
\input{figure_tikz/Components/grid}
\input{figure_tikz/Components/poly_flots}
\input{figure_tikz/Components/poly_approx_dashed}
\input{figure_tikz/Components/poly_DW_4}

\draw[-stealth, red, line width=0.7mm] (5,2) -- (4.5, 1.5);
\node[star, star points=5, draw, star point ratio=2.5, scale=0.4, fill=yellow] () at (5,2){};
\end{tikzpicture}
         \caption{Iteration 4. Once the current solution is the farthest point of $Q_2$ in the dual direction then it is optimal and the method stops.}
     \end{subfigure}
    \caption{Geometric illustration of the Dantzig-Wolfe decomposition}
    \label{DW_illustration}
\end{figure}

\subsubsection{Finding an improving point of $Q_2$} 

In the Dantzig-Wolfe decomposition, the master problem returns the farthest point $x^*$ of $LR_1 \cap \widehat{Q}_2$ in the direction $c$ and one would like to know whether this point is also the optimal for $LR_1 \cap Q_2$ or if the inner approximation $\widehat{Q}_2$ needs to be improved. The dual point of view of this statement, on which is based the Dantzig-Wolfe subproblem, is that the bound $c x \leq c x^*$ is valid for $LR_1 \cap \widehat{Q}_2$ and we would like to know if it also holds true for $LR_1 \cap Q_2$.

\textbf{Ideas from linear programming duality theory:} The following concepts are illustrated in Figure \ref{DW_bounds}. Linear programming duality informs us that because $LR_1 \cap \widehat{Q}_2$ is the intersection of two polyhedra, the bound $c x \leq c x^*$ can always be decomposed as the sum of two inequalities, one valid for $LR_1$ and the other valid for $\widehat{Q}_2$. Furthermore, the dual variables of the master problem yield such a decomposition of the optimal bound for $LR_1 \cap \widehat{Q}_2$. Indeed, let us denote $u$, $\pi$ and $\pi_0$ the optimal dual variables of the constraints $A_1x \leq b_1$, $x = \sum_i \lambda_i x_i$ and $\sum_i \lambda_i = 1$, respectively. By construction of the dual of the master problem (whose derivation we let to the reader), $uA_1x \leq ub_1$ is valid for $LR_1$, $\pi x \leq \pi_0$ is valid for $\widehat{Q}_2$ and these two inequalities sum to the optimal bound $c x \leq c x^*$ over $LR_1 \cap \widehat{Q}_2$ (note: the equality $ub + pi_0 = c x^*$ follows thanks to the strong duality theorem of linear programming \citep{matouvsek2007understanding}).

\begin{figure}[ht!]
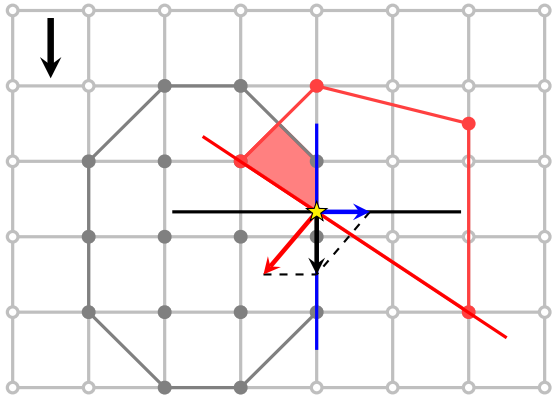

     \centering
     \begin{tikzpicture}[scale=1]
\input{figure_tikz/tikz_settings}
\input{figure_tikz/Components/grid}
\input{figure_tikz/Components/fill_inter_small}
\input{figure_tikz/Components/poly_flots}
\input{figure_tikz/Components/poly_DW_2}

\draw[black,very thick](3.1,3.33)--(6.9,3.33);
\draw[red,very thick](7.5,1.66)--(3.5,4.33);
\draw[blue,very thick](5,1.5)--(5,4.5);
\draw[black, dashed, thick](4.3, 2.5)--(5, 2.5);
\draw[black, dashed, thick](5.7, 3.33)--(5, 2.5);
\draw[ -stealth, red, line width=0.6mm] (5, 3.33) -- (4.3, 2.5);
\draw[-stealth, blue, line width=0.6mm] (5, 3.33) -- (5.7, 3.33);
\draw[-stealth, line width=0.6mm] (5, 3.33) -- (5, 2.5);
\node[star, star points=5, draw, star point ratio=2.5, scale=0.4, fill=yellow] () at (5,3.33){};
\end{tikzpicture}
    \caption{The optimal bound (black) on the optimization direction (black arrows) over the intersection (red fill) of $LR_1$ (grey) and $\widehat{Q}_2$ (red) can be decomposed as the sum of a valid inequality (blue) for $LR_1$ and a valid inequality (red) for $\widehat{Q}_2$.}
    \label{DW_bounds}
\end{figure}

\textbf{The Dantzig-Wolfe subproblem:} In order to show that the solution $x^*$ of the master problem is not the farthest point of $LR_1 \cap Q_2$ in the direction $c$, one must at least prove that the bound $c x \leq c x^*$ implied by the dual variables of the master problem is not valid for $LR_1 \cap Q_2$. However, if the inequality $\pi x \leq \pi_0$ were valid for $Q_2$, because we know that $uA_1x \leq ub_1$ is valid for $LR_1$ and that these two inequalities sum to the bound $c x \leq c x^*$ then this bound would be valid for $LR_1 \cap Q_2$. Thus, the goal of the sub-problem is to show that the inequality $\pi x \leq \pi_0$ is not valid for $Q_2$. To that end, the subproblem is tasked to find the farthest point of $Q_2$ in the direction $\pi$. If this point violates $\pi x \leq \pi_0$ then we have found a point of $Q_2$ violating the inequality. The point can then be added to the master problem to grow the inner approximation $\widehat{Q}_2$ and at least prevent the master problem from yielding the same dual variables again. Otherwise, the current solution is optimal because the bound implied by the dual variables is valid for $LR_1 \cap Q_2$.

\subsubsection{A note on degeneracy}
We have seen above that the dual variables of the master problem imply a bound on the value of its objective function. This bound can be considered a certificate that the current value of the master problem is optimal. Meanwhile, a linear program such as a Dantzig-Wolfe master problem is said to be degenerate when it has several dual optimal solutions. Each of these dual solutions is a certificate of optimality for the current value of the master problem. Thus, to improve the value of the master problem, one must invalidate each of these certificates. However, the subproblem of the Dantzig-Wolfe procedure only invalidates one of these certificates without any guarantee about the one implied by the other dual solutions. Thus the Dantzig-Wolfe decomposition is susceptible to having many iterations without improvement of the objective function when the master problem is highly degenerate. This can slow down the convergence of the method considerably.

\subsection{Fenchel decomposition}
\label{sec:presentation-Fenchel}

In Fenchel decomposition, instead of growing an inner approximation of $Q_2$, an outer approximation of the polyhedron $Q_2$ is refined to enable the optimization over $LR_1 \cap Q_2$. Such an outer approximation can use any collection of inequalities valid for $Q_2$. The decomposition is illustrated in Figure \ref{Fenchel_illustration} and proceeds as follows:
\begin{enumerate}
    \item Initialize an outer approximation $\widecheck{Q}_2$ with inequalities valid for $Q_2$; typically one can take $\widecheck{Q}_2 = LR_2$.
    \item Optimize over $LR_1 \cap \widecheck{Q}_2$ and recover a solution $\widecheck{x}$.
    \item Search for a cut separating $\widecheck{x}$ from $Q_2$.
    \item If such a cut exists, add the cut to $\widecheck{Q}_2$ and go to Step 2.
    \item Else: $\widecheck{x}$ is the optimal solution  over $LR_1 \cap Q_2$. Stop.
\end{enumerate}

\begin{figure}[ht!]
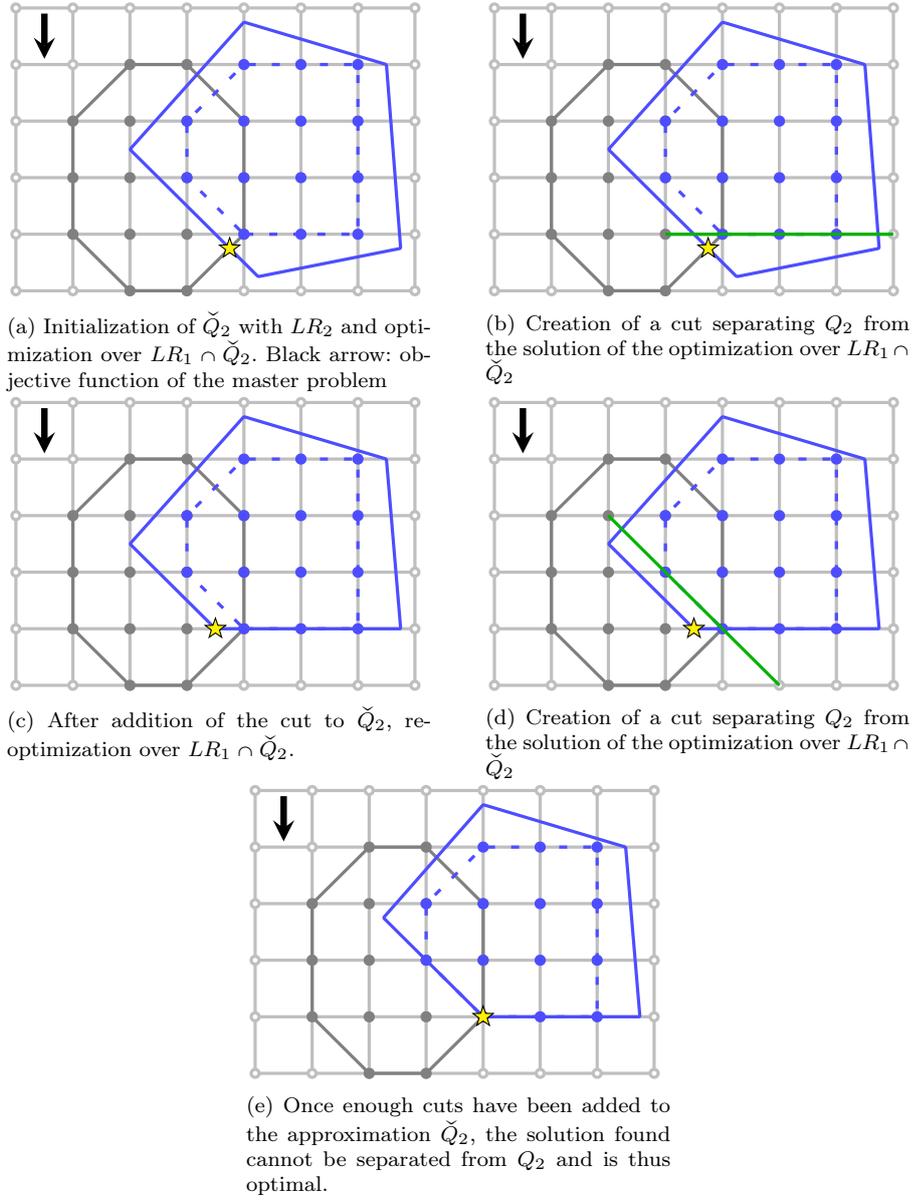

    \vspace{1cm}
     \centering
     \begin{subfigure}[t]{0.47 \columnwidth}
         \centering
         \begin{tikzpicture}[scale=0.75]
\input{figure_tikz/tikz_settings}
\input{figure_tikz/Components/grid}
\input{figure_tikz/Components/poly_flots}
\input{figure_tikz/Components/poly_approx_dashed}

\draw[dark-blue, very thick, rounded corners=0.5pt](5.25,1.25)--(7.75,1.75)--(7.5,5)--(5,5.75)--(3,3.5)--cycle;

\node[star, star points=5, draw, star point ratio=2.5, scale=0.4, fill=yellow] () at (4.75,1.75){};
\end{tikzpicture}
         \caption{Initialization of $\widecheck{Q}_2$ with $LR_2$ and optimization over $LR_1 \cap \widecheck{Q}_2$. Black arrow: objective function of the master problem}
     \end{subfigure}
     \hfill
     \begin{subfigure}[t]{0.47 \columnwidth}
         \centering
         \begin{tikzpicture}[scale=0.75]
\input{figure_tikz/tikz_settings}
\input{figure_tikz/Components/grid}
\input{figure_tikz/Components/poly_flots}
\input{figure_tikz/Components/poly_approx_dashed}

\draw[dark-blue, very thick, rounded corners=0.5pt](5.25,1.25)--(7.75,1.75)--(7.5,5)--(5,5.75)--(3,3.5)--cycle;

\draw[my-green, very thick, rounded corners=0.5pt](4,2)--(8,2);

\node[star, star points=5, draw, star point ratio=2.5, scale=0.4, fill=yellow] () at (4.75,1.75){};
\end{tikzpicture}
         \caption{Creation of a cut separating $Q_2$ from the solution of the optimization over $LR_1 \cap \widecheck{Q}_2$}
     \end{subfigure}
     \begin{subfigure}[t]{0.47 \columnwidth}
         \centering
         \begin{tikzpicture}[scale=0.75]
\input{figure_tikz/tikz_settings}
\input{figure_tikz/Components/grid}
\input{figure_tikz/Components/poly_flots}
\input{figure_tikz/Components/poly_approx_dashed}

\draw[dark-blue, very thick, rounded corners=0.5pt](4.5,2)--(7.75,2)--(7.5,5)--(5,5.75)--(3,3.5)--cycle;

\node[star, star points=5, draw, star point ratio=2.5, scale=0.4, fill=yellow] () at (4.5,2){};
\end{tikzpicture}
         \caption{After addition of the cut to $\widecheck{Q}_2$, re-optimization over $LR_1 \cap \widecheck{Q}_2$.}
     \end{subfigure}
     \hfill
     \begin{subfigure}[t]{0.47 \columnwidth}
         \centering
         \begin{tikzpicture}[scale=0.75]
\input{figure_tikz/tikz_settings}
\input{figure_tikz/Components/grid}
\input{figure_tikz/Components/poly_flots}
\input{figure_tikz/Components/poly_approx_dashed}

\draw[dark-blue, very thick, rounded corners=0.5pt](4.5,2)--(7.75,2)--(7.5,5)--(5,5.75)--(3,3.5)--cycle;

\draw[my-green, very thick, rounded corners=0.5pt](6,1)--(3,4);

\node[star, star points=5, draw, star point ratio=2.5, scale=0.4, fill=yellow] () at (4.5,2){};
\end{tikzpicture}
         \caption{Creation of a cut separating $Q_2$ from the solution of the optimization over $LR_1 \cap \widecheck{Q}_2$}
     \end{subfigure}
     \begin{subfigure}[t]{0.47 \columnwidth}
         \centering
         \begin{tikzpicture}[scale=0.75]
\input{figure_tikz/tikz_settings}
\input{figure_tikz/Components/grid}
\input{figure_tikz/Components/poly_flots}
\input{figure_tikz/Components/poly_approx_dashed}

\draw[dark-blue, very thick, rounded corners=0.5pt](5,2)--(7.75,2)--(7.5,5)--(5,5.75)--(3.25,3.75)--cycle;

\node[star, star points=5, draw, star point ratio=2.5, scale=0.4, fill=yellow] () at (5,2){};
\end{tikzpicture}
         \caption{Once enough cuts have been added to the approximation $ \widecheck{Q}_2$, the solution found cannot be separated from $Q_2$ and is thus optimal.}
     \end{subfigure}
    \caption{Geometric illustration of the Fenchel decomposition}
    \label{Fenchel_illustration}
\end{figure}

In the second step, in order to optimize over $LR_1 \cap \widecheck{Q}_2$, the following Fenchel master problem is used:
\begin{subequations}
\begin{alignat*}{3}
(F) \quad &\max_{x} && c^{T}x  \\
&\text{subject to} \quad && A_1 x \leq b_1 \\
&&& \pi x \leq \pi_0 \quad \forall (\pi, \pi_0) \in \mathcal{C}\\
&&& x \in \bar{X}
\end{alignat*}
\label{master_cut}
\end{subequations}
where $\mathcal{C}$ is the set of cuts describing $\widecheck{Q}_2$. 

The main challenge in the Fenchel decomposition is to generate a cut $ \pi x \leq \pi_0 $ separating the solution of the Fenchel master problem $ \widecheck{x} $ from the polyhedron $Q_2$. The classical approach to generating such cuts is based on a linear program as described in Section \ref{Fenchel_sp_normalization}.

\section{The Fenchel separation subproblem and its normalizations}
\label{Fenchel_sp_normalization}

In the Fenchel decomposition, a cut $ \pi x \leq \pi_0 $ separating the solution of the Fenchel master problem $ \widecheck{x} $ from the polyhedron $Q_2$ must be found. Such a cut can be created by finding a solution of non-negative value of the following separation linear program:
\begin{subequations}
\begin{alignat*}{3}
(S) \quad &\max_{\pi, \pi_0} && \pi \widecheck{x} - \pi_0 \\
&\text{subject to}  \quad && \pi x_i \leq \pi_0, \quad \forall x_i \in Q_2 \\
&&& \pi, \pi_0 \in \mathbb{R}
\end{alignat*}
\end{subequations}
where the objective maximizes the violation of the generated cut by $ \widecheck{x} $ while the constraints ensure that the cut is valid for every point of the polyhedron $ Q_2 $. However, this separation problem has too many constraints to consider them all explicitly. It is thus initially solved with a subset of its constraints ensuring that the generated cut is valid for a few points of $Q_2$. New points will progressively be taken into account in the constraints until the validity of the cut for the whole polyhedron can be ensured. To check if a cut $\bar{\pi} x \leq \bar{\pi_0}$ is valid for $Q_2$, we search for the point of $ Q_2 $ that most violates the cut. This can be done with a call to the optimization oracle ($ O $) for the linear function associated with $ \bar{\pi} $. If the point returned by ($O$) violates the cut, it is added as a constraint to ($S$). Otherwise, the cut $\bar{\pi} x \leq \bar{\pi_0}$ is the optimal solution of ($S$) and maximizes the separation of the solution of the Fenchel master problem from $Q_2$.

Another issue preventing the direct resolution of the problem ($ S $) is that its solution space is a cone. Indeed, if a cut $ \pi x \leq \pi_0 $ is valid for all the points of $ Q_2 $ then so is the cut $ \alpha \pi x \leq \alpha \pi_0 $ for any non-negative constant $ \alpha $. Thus, the problem ($S$) is often unbounded. To prevent this, a normalization process must be performed which classically consists in adding a constraint to the separation problem ($S$) that will make the set of possible coefficients of the generated cut bounded. The choice of this normalization greatly impacts the generated cut as well as the convergence speed of the Fenchel decomposition. In addition, the normalization may have an effect on the quality of the cut obtained, for instance by favoring (or not) the obtention of facets of the polyhedron $ Q_2 $. As we will see in the following sections, the normalization also impacts the dual problem of ($S$) which becomes ``\textit{Find the point of $Q_2$ minimizing some criterion of proximity to} $\widecheck{x}$". It is often more intuitive to interpret the impact of a normalization on the dual and therefore we use this approach for our geometric interpretations.

\subsection{Normalization $ \| \pi \| \leq 1 $}

\citet{boyd1995convergence} analyzes the normalization consisting in adding to the separation problem the constraint $ \| \pi \| \leq 1 $ for any norm  (\textit{i.e.} a function that satisfies the triangular inequality and for which for every $x \in \mathbb{R}^n$ and $\lambda \in \mathbb{R}$ satisfies $\|\lambda x\| = |\lambda| \| x\|$ and $\| x\| = 0 \Rightarrow x=0$). With this choice, we can consider that the optimized quantity is $ \frac{\pi \widecheck{x} - \pi_0}{\| \pi \|} $, \textit{i.e.} the distance from $ \widecheck{x} $ to the hyperplane $ \pi x = \pi_0 $. Note that this distance is not measured in the norm $ \|. \| $ but in its dual norm $ \|. \|^* $: $ \| \lambda \|^* = \ max_{\| x \| \leq 1} \lambda x $. In particular, note that the norms $ \|. \|_1 $ and $ \|. \|_{\infty} $ are dual and that the norm $ \|. \|_2 $ is self-dual. When using this normalization, the dual problem ($D$) becomes "Find the point of $Q_2$ closest to $\widecheck{x}$ in the sense of the dual norm $ \|. \|^* $". This point is located on the border of $ Q_2 $ and corresponds to the point of contact between $ Q_2 $ and the smallest sphere centered in $ \widecheck{x} $ touching $ Q_2 $. The generated cut is then a tangent cut to $ Q_2 $ and to the sphere at the point of contact. One important result of the work of \citet{boyd1995convergence} is that the polyhedron $ Q_2 $ can be computed in finite time with this normalization. On the other hand, the generated cuts are not always facets of the polyhedron $ Q_2 $. All these geometric interpretations are illustrated in the $ \|. \|_2 $ norm in Figure \ref{normalization_norme_2}.

\begin{figure}[ht]
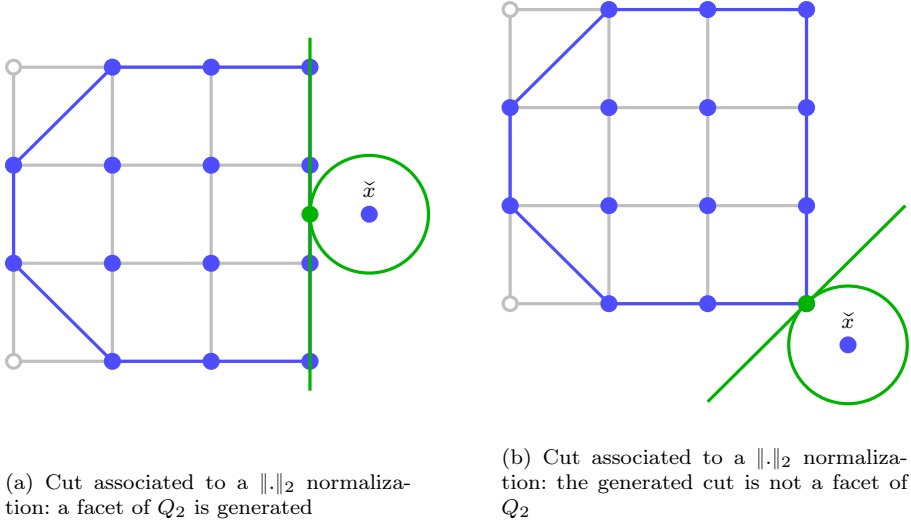

     \centering
     \begin{subfigure}[b]{0.45\columnwidth}
         \centering
         \begin{tikzpicture}[scale=1.3]
\input{figure_tikz/tikz_settings}
\input{figure_tikz/Components/grid_small}
\input{figure_tikz/Components/poly_approx_alone}

\draw[my-green, very thick, rounded corners=0.5pt](4,0.7)--(4,4.3);

\draw[dark-blue, fill=dark-blue, very thick](4.6,2.5) circle(0.07cm);

\draw[my-green, very thick](4.6,2.5) circle(0.6);

\draw[my-green, fill=my-green, very thick](4,2.5) circle(0.07cm);

\node (1) at (4.6,2.75) {$\widecheck{x}$};

\draw(2.5,0.3);
\end{tikzpicture}
         \caption{Cut associated to a $\|.\|_2$ normalization: a facet of $Q_2$ is generated}
     \end{subfigure}
     \hfill
     \begin{subfigure}[b]{0.45\columnwidth}
         \centering
         \begin{tikzpicture}[scale=1.3]
\input{figure_tikz/tikz_settings}
\input{figure_tikz/Components/grid_small}
\input{figure_tikz/Components/poly_approx_alone}

\draw[my-green, very thick, rounded corners=0.5pt](3,0)--(5,2);

\draw[dark-blue, fill=dark-blue, very thick](4.42,0.58) circle(0.07cm);

\draw[my-green, very thick](4.42,0.58) circle(0.6);

\draw[my-green, fill=my-green, very thick](4,1) circle(0.07cm);
\node (1) at (4.42,0.83) {$\widecheck{x}$};
\end{tikzpicture}
         \caption{Cut associated to a $\|.\|_2$ normalization: the generated cut is not a facet of $Q_2$}
     \end{subfigure}
    \caption{Example of cuts generated with a $\|.\|_2$ normalization}
    \label{normalization_norme_2}
\end{figure}

\subsection{Normalizations guaranteeing the generation of facets}

We are now interested in normalizations guaranteeing the generation of facets of $ Q_2 $. To do this, we start by presenting a theorem linking the facets of $ Q_2 $ and the extreme rays of the cone of the solutions of the separation problem ($ S $). This theorem can be found in \citet{conforti2019facet} (Proposition 1). In this theorem, $cone(X)$ (resp. $vect(X)$) is the set of all conical (resp. linear) combinations of elements of a set $X$.

\begin{theorem}
Let P be a nonempty polyhedron. Let $ (\pi^j x = \pi_0^j)_{j \in J ^ =} $ be a non-redundant representation of the affine envelope of P and let $ (\pi^j x \leq \pi_0^j)_{j \in J^{\leq}} $ be the set of facets of P. Then the set of valid cuts for P is:
$$\mathcal{C}(P) = cone\left(\begin{pmatrix} 0 \\1 \end{pmatrix}, \begin{pmatrix} \pi^j \\ \pi_0^j \end{pmatrix}_{j \in J^{\leq}} \right) + vect\left(\begin{pmatrix} \pi^j \\ \pi_0^j \end{pmatrix}_{j \in J^=} \right) $$
In addition, all these vectors are necessary for the previous representation.
\end{theorem}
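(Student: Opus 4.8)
The plan is to work directly with $\mathcal{C}(P)$ defined as the set of pairs $(\pi,\pi_0)$ for which $\pi x \le \pi_0$ holds for every $x \in P$, observe that this set is a convex cone in $\mathbb{R}^{n+1}$, and prove the two inclusions separately. The inclusion $\supseteq$ is immediate: each facet inequality $\pi^j x \le \pi_0^j$ is valid for $P$, the pair $(0,1)$ encodes the trivially valid inequality $0 \le 1$, and each equation $\pi^j x = \pi_0^j$ contributes two opposite valid inequalities, which is exactly why the $J^=$ vectors enter through $vect$ rather than $cone$. Since summing valid inequalities (using the equalities in either direction) preserves validity, any nonnegative combination of the facet vectors and $(0,1)$ plus an arbitrary linear combination of the equality vectors is again a valid cut.

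For the reverse inclusion $\subseteq$, the main tool is linear programming duality. Given a valid cut $(\pi,\pi_0)$, validity means exactly that $\max\{\pi x : x \in P\} \le \pi_0$, and this maximum is finite. Writing $P$ through its minimal description --- the affine-hull equations indexed by $J^=$ and the facet inequalities indexed by $J^\le$ --- I would dualize this feasible and bounded linear program. Strong duality then yields multipliers $\mu_j \ge 0$ for $j \in J^\le$ and free multipliers $\nu_j$ for $j \in J^=$ with $\pi = \sum_{j\in J^\le}\mu_j\pi^j + \sum_{j\in J^=}\nu_j\pi^j$ and $\sum_{j\in J^\le}\mu_j\pi_0^j + \sum_{j\in J^=}\nu_j\pi_0^j \le \pi_0$. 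The nonnegative gap between this right-hand side and $\pi_0$ is absorbed by a nonnegative multiple of $(0,1)$, exhibiting $(\pi,\pi_0)$ in the claimed $cone+vect$ form.

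For the necessity statement I would argue separately about the lineality space and the extreme rays of $\mathcal{C}(P)$. A pair $(\pi,\pi_0)$ lies in the lineality space of $\mathcal{C}(P)$ iff both it and its negative are valid, i.e. iff $\pi x = \pi_0$ holds on all of $P$; the set of such valid equations is precisely $vect\{(\pi^j,\pi_0^j)_{j\in J^=}\}$, and because the given representation of the affine hull is non-redundant these vectors are linearly independent, so none can be dropped. For a facet generator $(\pi^k,\pi_0^k)$, I would use the non-redundancy of the facet description to pick a point $\bar x \in \mathrm{aff}(P)$ satisfying all facet inequalities except the $k$-th; evaluating any putative expression of $(\pi^k,\pi_0^k)$ as a conic combination of the remaining generators plus a lineality element at $\bar x$ gives a nonpositive value on the right but the strictly positive value $\pi^k\bar x - \pi_0^k$ on the left, a contradiction, so each facet ray is extreme and irredundant.

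The step I expect to be the most delicate is the necessity of the generator $(0,1)$. Its irredundancy is genuinely tied to the recession structure of $P$: one must produce a valid cut whose right-hand side cannot be tightened by the facet and equality combination alone, and this is possible precisely when $0$ is not a conic combination of the facet normals, i.e. when $P$ has nontrivial recession directions. I would make this precise through a polarity/homogenization argument, lifting $P$ to the cone generated by $P\times\{1\}$ so that the valid cuts become its polar cone, facets of $P$ correspond to extreme rays of that polar, and $(0,1)$ plays the role of the ray dual to the recession cone; extremality of $(0,1)$ then follows from the standard facet--extreme-ray correspondence for polar pairs of polyhedral cones. This is also the place where I would be most careful, since for a bounded $P$ the vector $(0,1)$ can in fact become a conic combination of facets, so the necessity claim must be read with the recession structure of $P$ in mind.
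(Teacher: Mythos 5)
The paper itself contains no proof of this statement --- it is quoted from \citet{conforti2019facet} (Proposition 1) --- so your attempt can only be judged on its own merits, and on the main claims it succeeds. Your two inclusions are the standard argument and are correct: $\supseteq$ is immediate from summing valid inequalities, and for $\subseteq$ you apply strong LP duality to $\max\{\pi x : x\in P\}$ over the minimal description of $P$ (affine-hull equations plus one inequality per facet), which is feasible and bounded precisely because $(\pi,\pi_0)$ is valid; the dual multipliers give the conic/linear combination and the duality gap to $\pi_0$ is absorbed by $(0,1)$. Your necessity argument for each facet vector (evaluate at a point $\bar x\in \mathrm{aff}(P)$ violating only the $k$-th facet inequality, which exists by irredundancy of facet-defining inequalities) is sound. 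For the equality vectors, identifying the lineality space of $\mathcal{C}(P)$ with $vect\left((\pi^j,\pi_0^j)_{j\in J^=}\right)$ and invoking linear independence is essentially right, but add one line ruling out help from the conic part: if an element of $cone\left((0,1),(\pi^j,\pi_0^j)_{j\in J^{\leq}}\right)$ lies in the lineality space, evaluating at a relative-interior point of $P$ (where all facet inequalities are strict) forces all its multipliers, hence the element itself, to vanish.

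Your hesitation about $(0,1)$ is not a defect of your proof but a flaw in the statement as transcribed: the necessity of $(0,1)$ is simply false for bounded $P$. For $P=[0,1]$ one has $J^==\emptyset$, facet vectors $(1,1)$ and $(-1,0)$, and $(0,1)=(1,1)+(-1,0)$, so the trivial ray can be dropped without changing the cone. However, your proposed criterion (``precisely when $P$ has nontrivial recession directions'') is also not correct: for the slab $P=\{(x,y): 0\le y\le 1\}$, whose recession cone is the whole $x$-axis, the facet vectors $\left((0,1),1\right)$ and $\left((0,-1),0\right)$ still sum to the trivial ray $\left((0,0),1\right)$. The exact characterization is that $(0,1)$ is irredundant if and only if no nontrivial nonnegative combination of the facet vectors lies in $vect\left((\pi^j,\pi_0^j)_{j\in J^=}\right)$, equivalently if and only if the recession cone of $P$ is full-dimensional in the direction space of $\mathrm{aff}(P)$ (this covers $P=\mathrm{aff}(P)$, where $(0,1)$ is always needed, and excludes all polytopes of positive dimension). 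So the theorem's last sentence should be read --- as the paper's own commentary ``apart from the trivial ray $(0,1)$\,'' suggests --- as asserting necessity of the facet and equality vectors only; under that reading your proof is complete without the polarity/homogenization machinery you sketched, and if necessity of $(0,1)$ is wanted it must carry the recession-cone hypothesis above.
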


This theorem states that apart from the trivial ray $ (0,1) $, the extreme rays of the cone of the solutions of ($ S $) are associated with the facets of the polyhedron $ Q_2 $. When the polyhedron is not fully dimensional (\emph{i.e.} $ J^= \neq \emptyset $), the cuts corresponding to $ (\pi^j, \pi_0^j)_{j \in J^=} $ are the improper faces of $Q_2$. Let us suppose for the following that $ Q_2 $ is full-dimensional. This happens once all the improper faces have been generated.

A sufficient condition ensuring that the cuts generated are facets of $ Q_2 $ is to use a normalization condition which can be applied by adding a unique linear constraint to the separation problem. If only one linear constraint is added to ($ S $), then all the extreme points of the new solution space correspond to old extreme rays and therefore to facets of $ Q_2 $. If the problem ($ S $) is indeed made bounded by the addition of the linear constraint, then it suffices to find an optimal vertex to guarantee the construction of a facet. More generally, it is possible to add several linear constraints, as long as they do not intersect inside the cone of solution of ($ S $). For example, adding a constraint $ -1 \leq f (\pi, \pi_0) \leq 1 $, where $ f $ is a linear function, corresponds to adding two linear constraints that do not intersect. The cuts generated will then induce facets of $ Q_2 $. When, for reasons specific to the problem, we know that valid cuts satisfy $ \pi \geq 0 $ (for example if $ Q_2 $ is a knapsack polyhedron), then the condition $ \| \pi \|_1 \leq 1 $ reduces to $ \sum_i \pi_i \leq 1 $ and can be used to generate facet inducing inequalities.

\textbf{Normalization of $ \pi_0 $:} A normalization guaranteeing the generation of facets is $ | \pi_0 | \leq $ 1. This normalization is studied in detail in \citet{conforti2019facet}. It makes the problem ($ S $) bounded if and only if $ \widecheck{x} $ can be written as a combination of elements in $ Q_2 $ using only non-negative coefficients. Indeed, let us look at the impact of normalization on the dual problem ($ D $) which becomes:
\begin{subequations}
\begin{alignat*}{3}
(D) \quad &\min_{\lambda_i, ~ z} && |z|  \\
&\text{subject to} \quad && \sum_{i \in I} \lambda_i x_i = \widecheck{x}\\
&&& \sum_{i \in I} \lambda_i = 1 + z\\
&&&  z \in \mathbb{R}, ~ \lambda_i \in \mathbb{R}^+, \quad \forall i \in I
\end{alignat*}
\end{subequations}
In simple words, this problem can be interpreted as follows: find a combination with non-negative coefficients of elements of $ Q_2 $ equal to $ \widecheck{x} $ whose sum of the coefficients is as close as possible to 1. The separation problem ($ S $) is bounded if and only if its dual ($ D $) is feasible. However, the problem ($ D $) is feasible if and only if $ \widecheck{x} $ can be written as a combination with non-negative coefficients of elements of $ Q_2 $. This condition is naturally reached when the origin is in the interior of $ Q_2 $ since the set of combinations with non-negative coefficients of elements of $ Q_2 $ is then the entire space. If we know a point in the interior of $ Q_2 $, it is possible to ensure this condition by translating the problem to place the origin on this interior point. Once the origin is inside $ Q_2 $, the generated cut is a facet of $ Q_2 $ intersecting the segment connecting $ \widecheck{x} $ to the origin.

\textbf{Directional normalization of $ \pi $:} A second normalization, studied by \citet{bonami2003etude} and guaranteeing the generation of facets, consists in bounding the coefficients of $ \pi $ in a given direction using the constraint $ | (\widehat{x} - \widecheck{x}) \pi | \leq 1 $ where $ \widehat{x} $ is an arbitrary point in $\mathbb{R}^n$. In order to determine when this normalization makes the problem ($ S $) bounded, let us look at its impact on the dual problem ($ D $) which becomes:
\begin{subequations}
\begin{alignat*}{3}
(D) \quad &\min_{\lambda_i, ~ z} && |z|  \\
&\text{subject to} \quad && \sum_{i \in I} \lambda_i x_i = z \widehat{x} + (1 - z) \widecheck{x}\\
&&& \sum_{i \in I} \lambda_i = 1\\
&&&  z \in \mathbb{R}, ~ \lambda_i \in \mathbb{R}^+, \quad \forall i \in I
\end{alignat*}
\end{subequations}
This problem can be interpreted as follows: \textit{find the point of $ Q_2 $ closest to $ \widecheck{x} $ on the line containing $ \widehat{x} $ and $ \widecheck{x} $}. The problem ($ S $) is bounded when its dual ($ D $) is feasible which happens when there is a point belonging to both $ Q_2 $ and to the previous line. This condition can be fulfilled for instance by choosing $ \widehat{x} $ equal to a known point of $ Q_2 $. In this case, the generated cut is a facet of $ Q_2 $ intersecting the segment connecting $ \widecheck{x} $ and $ \widehat{x} $. This is illustrated in Figure \ref{normalization_directionnelle_illustration}. Unlike the previous normalization, the known point $ \widehat{x} $ does not need to be in the interior of $ Q_2 $. However, note that when it is on the border of $ Q_2 $, the optimal value of the problem ($ S $) is bounded but the set of optimal solutions may be unbounded. Indeed, let us consider the following example illustrated in Figure \ref{normalization_directionnelle_cas_limite}.

\textit{Example: Suppose that $ \widehat{x} $ is located on a vertex of $ Q_2 $ and that $ \widecheck{x} $ is located such that the line $ (\widecheck{x}, \widehat{x}) $ intersects $ Q_2 $ only in $ \widehat{x} $. Let $ \pi x \leq \pi \widehat{x} $ be an optimal cut separating $ \widecheck{x} $ from $ Q_2 $ thus passing through $ \widehat{x} $. Let $ \pi^{\bot} $ be the projection of $ \pi $ on the orthogonal of the vector space induced by the vector $ \widecheck{x} - \widehat{x} $. Finally let us assume that the cut $ \pi^{\bot} x \leq \pi^{\bot} \widehat{x} $ is valid for $Q_2$. This is for instance the case in the 2D example illustrated in Figure \ref{normalization_directionnelle_cas_limite} as the hyperplane $ \pi^{\bot} x = \pi^{\bot} \widehat{x} $ is, in this case, the line $ (\widecheck{x}, \widehat{x}) $. Then for all $ \alpha \geq 0 $, the cut $ (\pi + \alpha \pi^{\bot}) x \leq (\pi + \alpha \pi^{\bot}) \widehat{x} $ is also an optimal cut for the separation problem. Indeed, it is valid for $Q_2$ as a non-negative combination of valid cuts for $Q_2$. Moreover, the violation of this cuts by $ \widecheck{x} $ is the same as the violation of $ \pi x \leq \pi \widehat{x} $ because by construction of $ \pi^{\bot} $ the product $ \pi^{\bot} ( \widecheck{x} - \widehat{x}) $ is equal to zero. Thus, in this example, the separation problem is bounded because its dual is feasible but the set of optimal cuts is unbounded because the norm of $\pi + \alpha \pi^{\bot}$ approaches infinity when $\alpha$ does the same.}

Despite this unbounded set of solutions, only the facets of the polyhedron $ Q_2 $ are vertices of the solution space of ($ S $). Thus, if the algorithm solving ($ S $) always returns a vertex, the generated cut will always be a facet of $Q_2$.

\begin{figure}[ht]
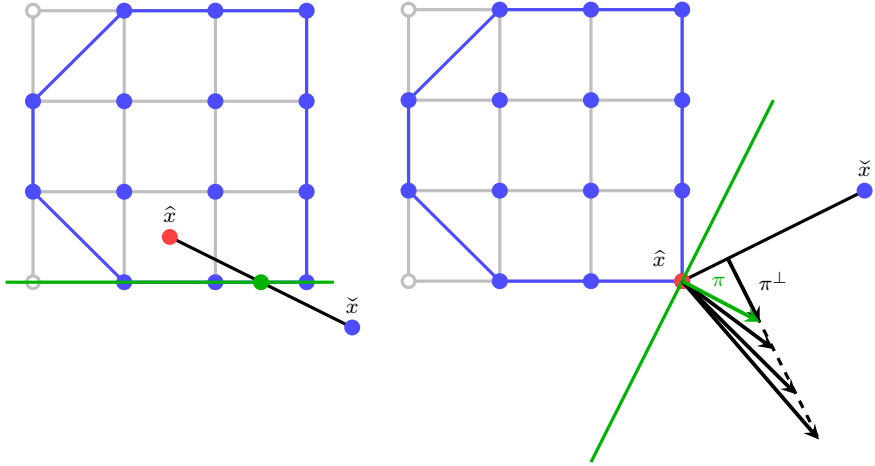

     \centering
     \begin{subfigure}[t]{0.43 \columnwidth}
         \centering
         \begin{tikzpicture}[scale=1.2]
\input{figure_tikz/tikz_settings}
\input{figure_tikz/Components/grid_small}
\input{figure_tikz/Components/poly_approx_alone}

\draw[black, very thick, rounded corners=0.5pt](4.5,0.5)--(2.5,1.5);

\draw[dark-blue, fill=dark-blue, very thick](4.5,0.5) circle(0.07cm);
\draw[my-red, fill=my-red, very thick](2.5,1.5) circle(0.07cm);

\draw[my-green, very thick, rounded corners=0.5pt](0.7,1)--(4.3,1);

\draw[my-green, fill=my-green, very thick](3.5,1) circle(0.07cm);
\draw(3.5,-1);

\node (1) at (4.5,0.75) {$\widecheck{x}$};
\node (1) at (2.5,1.75) {$\widehat{x}$};
\end{tikzpicture}
         \caption{Cut associated with directional normalization}
        \label{normalization_directionnelle_illustration}
     \end{subfigure}
     \begin{subfigure}[t]{0.56 \columnwidth}
         \centering
         \begin{tikzpicture}[scale=1.2]
\input{figure_tikz/tikz_settings}
\input{figure_tikz/Components/grid_small}
\input{figure_tikz/Components/poly_approx_alone}

\draw[black, very thick, rounded corners=0.5pt](6,2)--(4,1);
\draw[dark-blue, fill=dark-blue, very thick](6,2) circle(0.07cm);
\draw[my-red, fill=my-red, very thick](4,1) circle(0.07cm);

\draw[my-green, very thick, rounded corners=0.5pt](3,-1)--(5,3);

\draw[black, very thick, dashed, rounded corners=0.5pt](4.5,1.25)--(5.5,-0.75);

\draw[-stealth, line width=0.5mm] (4.5,1.25)--(4.85,0.55);
\draw[-stealth, line width=0.5mm] (4,1)--(5,0.25);
\draw[-stealth, line width=0.5mm] (4,1)--(5.25,-0.25);
\draw[-stealth, line width=0.5mm] (4,1)--(5.5,-0.75);
\draw[-stealth, my-green, line width=0.5mm] (4,1)--(4.85,0.55);

\node[draw=none, my-green] at (4.4,1) {$\pi$};
\node[draw=none] at (5,1) {$\pi^\bot$};
\node (1) at (6,2.25) {$\widecheck{x}$};
\node (1) at (3.75,1.25) {$\widehat{x}$};

\end{tikzpicture}
         \caption{Different normals of optimal cuts for the directional normalization: the norm of optimal normals can tend to infinity}
        \label{normalization_directionnelle_cas_limite}
     \end{subfigure}
    \caption{Examples of results obtained when using directional normalization.}
    \label{normalization_directionnelle}
\end{figure}

The above normalizations have been presented several times in the literature and are applicable in the general case. However, certain problems can admit normalizations particularly adapted to their structure. One can find such normalizations for example in the separation problems of disjunctive programming \citep{balas2002lift}. In addition, the unsplittable flow problem, which will be used in our experimental study, admits a normalization that seems natural and which be presented in Section \ref{normalization_naturelle}.

\section{A new approach for the Fenchel sub-problem}
\label{iterative_sub_problem}

In this section, we present a new approach to solve the separation problem of the Fenchel decomposition when the directional normalization presented in Section \ref{Fenchel_sp_normalization} is used. 

\subsection{Presentation of the method}

The proposed method is described in Algorithm \ref{algo:Fenchel_sp} and illustrated in Figure \ref{illustration_sub_problem}. Note that the cut associated with a directional normalization toward $ \widehat{x} $ is the same for $ \widecheck{x} $ than for any other point of the segment $ (\widecheck{x}, \widehat{x}) $ not belonging to $ Q_2 $.  The underlying idea is to compute intermediate cuts using an alternative normalization. By projecting a point $ x '$ (initially equal to $ \widecheck{x} $) onto these intermediate cuts, the algorithm gradually shifts the point $ x '$ along the segment $ (\widecheck{x}, \widehat{x}) $ in the direction of $ \widehat{x} $. Once $ x '$ reaches the frontier of $ Q_2 $ the procedure stops. Indeed, the last cut generated is the one associated with the directional normalization toward $\widehat{x}$ since it contains the intersection between the segment $ (\widecheck{x}, \widehat{x}) $ and the frontier of $Q_2$.

\begin{algorithm}
    \caption{New approach for solving the Fenchel sub-problem}
    \begin{algorithmic}[1]
    \Require a polyhedron $Q_2$, a point $\widecheck{x}$ to separate from $Q_2$, a point $\widehat{x}$ belonging to $Q_2$
    \Ensure a cut $C$ separating $\widecheck{x}$ from $Q_2$ and containing the intersection point the frontier of $Q_2$ and the segment $(\widecheck{x}, \widehat{x})$, a list $LS$ of vertices of $Q_2$ satisfying the cut $C$ to equality
    \State Set $x'$ equal to $\widecheck{x}$
    \While{$x' \notin Q_2$}
    \State $C$, $LS$ = Secondary\_separation($Q_2$, $x'$)
    \State Set $x'$ equal to the intersection point between the segment  $(\widecheck{x}$, $\widehat{x})$ and the cut $C$
    \EndWhile
    \State \Return $C$, $LS$
    \end{algorithmic}
    \label{algo:Fenchel_sp}
\end{algorithm}

\begin{figure}[ht]
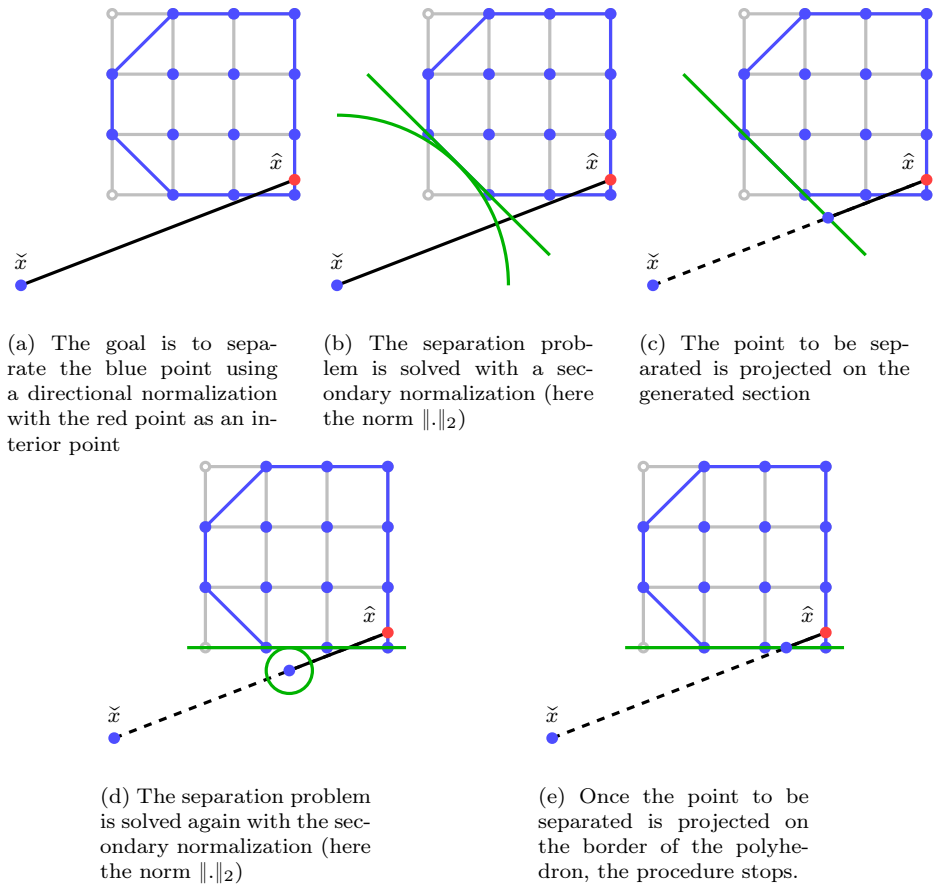

     \centering
     \begin{subfigure}[t]{0.3 \columnwidth}
         \centering
         \begin{tikzpicture}[scale=0.8]
\input{figure_tikz/tikz_settings}
\input{figure_tikz/Components/grid_small}
\input{figure_tikz/Components/poly_approx_alone}

\draw[black, very thick, rounded corners=0.5pt](-0.5,-0.5)--(4,1.25);

\draw[dark-blue, fill=dark-blue, very thick](-0.5,-0.5) circle(0.07cm);
\draw[my-red, fill=my-red, very thick](4,1.25) circle(0.07cm);
\node (1) at (-0.5,-0.1) {$\widecheck{x}$};
\node (1) at (3.7, 1.55) {$\widehat{x}$};

\end{tikzpicture}
         \caption{The goal is to separate the blue point using a directional normalization with the red point as an interior point}
     \end{subfigure}
     \hfill
     \begin{subfigure}[t]{0.3 \columnwidth}
         \centering
         \begin{tikzpicture}[scale=0.8]
\input{figure_tikz/tikz_settings}
\input{figure_tikz/Components/grid_small}
\input{figure_tikz/Components/poly_approx_alone}

\draw[black, very thick, rounded corners=0.5pt](-0.5,-0.5)--(4,1.25);

\draw[dark-blue, fill=dark-blue, very thick](-0.5,-0.5) circle(0.07cm);
\draw[my-red, fill=my-red, very thick](4,1.25) circle(0.07cm);

\draw[my-green, very thick, rounded corners=0.5pt](0,3)--(3,0);
\draw [my-green, very thick](2.32,-0.5) arc(0:90:2.82);
\node (1) at (-0.5,-0.1) {$\widecheck{x}$};
\node (1) at (3.7, 1.55) {$\widehat{x}$};

\end{tikzpicture}
         \caption{The separation problem is solved with a secondary normalization (here the norm $\|.\|_2$)}
     \end{subfigure}
     \hfill
     \begin{subfigure}[t]{0.3 \columnwidth}
         \centering
         \begin{tikzpicture}[scale=0.8]
\input{figure_tikz/tikz_settings}
\input{figure_tikz/Components/grid_small}
\input{figure_tikz/Components/poly_approx_alone}

\draw[black, very thick, dashed, rounded corners=0.5pt](-0.5,-0.5)--(4,1.25);
\draw[black, very thick, rounded corners=0.5pt](-0.5 + 2.88,-0.5 + 1.12)--(4,1.25);

\draw[dark-blue, fill=dark-blue, very thick](-0.5,-0.5) circle(0.07cm);
\draw[my-red, fill=my-red, very thick](4,1.25) circle(0.07cm);

\draw[my-green, very thick, rounded corners=0.5pt](0,3)--(3,0);

\draw[dark-blue, fill=dark-blue, very thick](-0.5 + 2.88,-0.5 + 1.12) circle(0.07cm);
\node (1) at (-0.5,-0.1) {$\widecheck{x}$};
\node (1) at (3.7, 1.55) {$\widehat{x}$};

\end{tikzpicture}
         \caption{The point to be separated is projected on the generated section}
     \end{subfigure}
     \hfill
     \begin{subfigure}[t]{0.3 \columnwidth}
         \centering
         \begin{tikzpicture}[scale=0.8]
\input{figure_tikz/tikz_settings}
\input{figure_tikz/Components/grid_small}
\input{figure_tikz/Components/poly_approx_alone}

\draw[black, very thick, dashed, rounded corners=0.5pt](-0.5,-0.5)--(4,1.25);
\draw[black, very thick, rounded corners=0.5pt](-0.5 + 2.88,-0.5 + 1.12)--(4,1.25);

\draw[dark-blue, fill=dark-blue, very thick](-0.5,-0.5) circle(0.07cm);
\draw[my-red, fill=my-red, very thick](4,1.25) circle(0.07cm);

\draw[my-green, very thick, rounded corners=0.5pt](0.7,1)--(4.3,1);

\draw[dark-blue, fill=dark-blue, very thick](-0.5 + 2.88,-0.5 + 1.12) circle(0.07cm);

\draw[my-green, very thick](-0.5 + 2.88,-0.5 + 1.12) circle(1.5-1.12);
\node (1) at (-0.5,-0.1) {$\widecheck{x}$};
\node (1) at (3.7, 1.55) {$\widehat{x}$};

\end{tikzpicture}
         \caption{The separation problem is solved again with the secondary normalization (here the norm $\|.\|_2$)}
     \end{subfigure}
     \hspace*{2cm}
     \begin{subfigure}[t]{0.3 \columnwidth}
         \centering
         \begin{tikzpicture}[scale=0.8]
\input{figure_tikz/tikz_settings}
\input{figure_tikz/Components/grid_small}
\input{figure_tikz/Components/poly_approx_alone}

\draw[black, very thick, dashed, rounded corners=0.5pt](-0.5,-0.5)--(4,1.25);
\draw[black, very thick, rounded corners=0.5pt](3.35,1)--(4,1.25);

\draw[dark-blue, fill=dark-blue, very thick](-0.5,-0.5) circle(0.07cm);
\draw[my-red, fill=my-red, very thick](4,1.25) circle(0.07cm);

\draw[my-green, very thick, rounded corners=0.5pt](0.7,1)--(4.3,1);

\draw[dark-blue, fill=dark-blue, very thick](3.35,1) circle(0.07cm);

\node (1) at (-0.5,-0.1) {$\widecheck{x}$};
\node (1) at (3.7, 1.55) {$\widehat{x}$};

\end{tikzpicture}
         \caption{Once the point to be separated is projected on the border of the polyhedron, the procedure stops.}
     \end{subfigure}
     \hfill
     \caption{Example of solving the Fenchel sub-problem for directional normalization using a secondary normalization}
    \label{illustration_sub_problem}
\end{figure}

The advantage of this iterative approach is that the linear separation program ($ S $) is never directly solved with the directional normalization, which often presents numerical instabilities. Indeed, for this normalization, the separation problem ($ S $) seeks a face of the polyhedron $ Q_2 $ intersecting the segment $ (\widecheck{x}, \widehat{x}) $. If, for example, this segment intersected a facet of $ Q_2 $ by forming an angle close to $ 0 $ with it then a small error on the parameters of the segment or of the facet can induce a large error on the position of the point of intersection. For this reason, the problem ($ S $) is sometimes too numerically unstable to be solved directly. The new method presented above yields an alternative way of computing the cut associated with the directional normalization without ever solving the numerically unstable linear program of the direct method. Unfortunately, note that this iterative algorithm requires the computation of the intersection between the segment $ (\widecheck{x}, \widehat{x}) $ and the cuts returned by the secondary separation problem. If the segment is almost parallel to one of these cuts, the computation of this point of intersection may still be numerically unstable. However, this is only an intermediate point not returned by the separation oracle. The results of the method are a cut and vertices which are computed with a secondary normalization. In practice, this seems sufficient as no numerical instabilities were found during our thorough numerical campaign.

Our experimental section yields insights into the practical performance of this new approach for the separation oracle. From a theoretical standpoint, we provide proofs in \ref{proof_subproblem} that the method converges in a finite number of iterations in two cases: 1) when the secondary normalization always generates facets of $ Q_2 $, and 2) when the secondary normalization is $ \| \pi \| \leq 1 $ for any norm. These two cases cover most normalizations used in the literature including all those mentioned in Section \ref{Fenchel_sp_normalization}.

\section{Coupling the Fenchel and Dantzig-Wolfe decompositions}
\label{hybrid_DW_Fenchel}

In this section, we present a decomposition method that integrates Dantzig-Wolfe and Fenchel decompositions. In our experimental campaign, we show that the new method presents a much superior performance than Fenchel decomposition alone, and is competitive against Dantzig-Wolfe on non-degenerate problems. For degenerate problems, again the proposed method shows a superior perormance to a classical Dantzig-Wolfe decomposition.

\subsection{Presentation of the method}

When generating a Fenchel cut, the primal variables of the separation sub-problem ($ S $) are the coefficients of the generated cut and the active constraints correspond to the vertices of the separated polyhedron $ Q_2 $ verifying this cut at equality. Thus, the Fenchel sub-problem generates both valid cuts for the polyhedron $ Q_2 $ and vertices of this polyhedron.

The main idea of the new decomposition method is to use two master problems operating in tandem. The first corresponds to the Fenchel formulation ($ F $) in which the generated Fenchel cuts are added to improve an outer approximation of $Q_2$. The second one is the Dantzig-Wolfe formulation ($ DW $) in which the vertices generated in the separation sub-problem are added to grow an inner approximation of $Q_2$. One of the key points of the method is that points $ \widecheck{x} $ provided by the Fenchel master problem are separated using directional normalization. This normalization requires the knowledge of a point in the polyhedron $ Q_2 $, for which we use the solution $ \widehat{x} $ computed by the Dantzig-Wolfe master problem.

The steps of the algorithm, illustrated in Figure \ref{illustration_DWF}, are as follows:
\begin{enumerate}
    \item Initialize an inner approximation $\widehat{Q}_2$ of the polyhedron $Q_2$ as in the Dantzig-Wolfe decomposition and an outer approximation $\widecheck{Q}_2$ as in the Fenchel decomposition.
    \item Optimize over $LR_1 \cap \widecheck{Q}_2$ using a Fenchel master problem: a solution $ \widecheck{x} $ is obtained whose value is an upper bound of the problem.
    \item Optimize over $LR_1 \cap \widehat{Q}_2$ using a Dantzig-Wolfe master problem: a solution $ \widehat{x} $ is obtained whose value is a lower bound of the problem.
    \item If the two bounds are equal: end of the algorithm
    \item With the Fenchel separation problem ($ S $), separate the point $ \widecheck{x} $ from $ Q_2 $ using directional normalization with $ \widehat{x} $ as the interior point: a cut is obtained as well as vertices of $ Q_2 $.
    \item Add the cut to the outer approximation $\widecheck{Q}_2$ and the vertices to the inner approximation $\widehat{Q}_2$. Then, go to Step 2.
\end{enumerate}

\begin{figure}[ht]
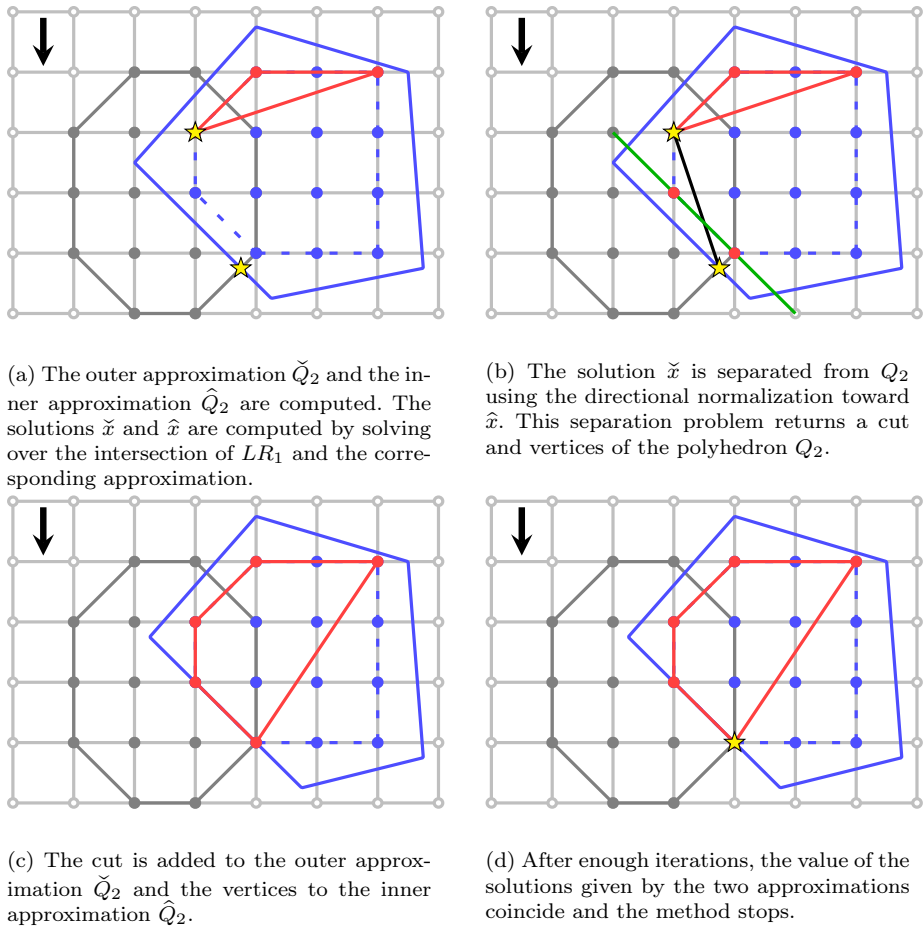

     \centering
     \begin{subfigure}[t]{0.47 \columnwidth}
         \centering
         \begin{tikzpicture}[scale=0.8]
\input{figure_tikz/tikz_settings}
\input{figure_tikz/Components/grid}
\input{figure_tikz/Components/poly_flots}
\input{figure_tikz/Components/poly_approx_dashed}

\draw[my-red, fill=my-red, very thick] (7,5) circle(0.07cm);
\draw[my-red, fill=my-red, very thick] (4,4) circle(0.07cm);
\draw[my-red, fill=my-red, very thick] (5,5) circle(0.07cm);
\draw[my-red, very thick, rounded corners=0.5pt](7,5)--(4,4)--(5,5)--cycle;

\draw[dark-blue, very thick, rounded corners=0.5pt](5.25,1.25)--(7.75,1.75)--(7.5,5)--(5,5.75)--(3,3.5)--cycle;

\node[star, star points=5, draw, star point ratio=2.5, scale=0.4, fill=yellow] () at (4,4){};
\node[star, star points=5, draw, star point ratio=2.5, scale=0.4, fill=yellow] () at (4.75,1.75){};
\end{tikzpicture}
         \caption{The outer approximation $\widecheck{Q}_2$ and the inner approximation $\widehat{Q}_2$ are computed. The solutions $ \widecheck{x} $ and $ \widehat{x} $ are computed by solving over the intersection of $LR_1$ and the corresponding approximation.}
     \end{subfigure}
     \hfill
     \begin{subfigure}[t]{0.47 \columnwidth}
         \centering
         \begin{tikzpicture}[scale=0.8]
\input{figure_tikz/tikz_settings}
\input{figure_tikz/Components/grid}
\input{figure_tikz/Components/poly_flots}
\input{figure_tikz/Components/poly_approx_dashed}

\draw[my-red, fill=my-red, very thick] (7,5) circle(0.07cm);
\draw[my-red, fill=my-red, very thick] (4,4) circle(0.07cm);
\draw[my-red, fill=my-red, very thick] (5,5) circle(0.07cm);
\draw[my-red, very thick, rounded corners=0.5pt](7,5)--(4,4)--(5,5)--cycle;

\draw[dark-blue, very thick, rounded corners=0.5pt](5.25,1.25)--(7.75,1.75)--(7.5,5)--(5,5.75)--(3,3.5)--cycle;

\draw[black, very thick, rounded corners=0.5pt](4,4)--(4.75,1.75);
\node[star, star points=5, draw, star point ratio=2.5, scale=0.4, fill=yellow] () at (4,4){};
\node[star, star points=5, draw, star point ratio=2.5, scale=0.4, fill=yellow] () at (4.75,1.75){};

\draw[my-green, very thick, rounded corners=0.5pt](3,4)--(6,1);
\draw[my-red, fill=my-red, very thick] (5,2) circle(0.07cm);
\draw[my-red, fill=my-red, very thick] (4,3) circle(0.07cm);
\end{tikzpicture}
         \caption{The solution $\widecheck{x}$ is separated from $Q_2$ using the directional normalization toward $\widehat{x}$. This separation problem returns a cut and vertices of the polyhedron $Q_2$.}
     \end{subfigure}
     \begin{subfigure}[t]{0.47 \columnwidth}
         \centering
         \begin{tikzpicture}[scale=0.8]
\input{figure_tikz/tikz_settings}
\input{figure_tikz/Components/grid}
\input{figure_tikz/Components/poly_flots}
\input{figure_tikz/Components/poly_approx_dashed}

\draw[dark-blue, very thick, rounded corners=0.5pt](5.75,1.25)--(7.75,1.75)--(7.5,5)--(5,5.75)--(3.25,3.75)--cycle;

\draw[my-red, fill=my-red, very thick] (7,5) circle(0.07cm);
\draw[my-red, fill=my-red, very thick] (4,4) circle(0.07cm);
\draw[my-red, fill=my-red, very thick] (5,5) circle(0.07cm);
\draw[my-red, fill=my-red, very thick] (5,2) circle(0.07cm);
\draw[my-red, fill=my-red, very thick] (4,3) circle(0.07cm);
\draw[my-red, very thick, rounded corners=0.5pt](7,5)--(5,5)--(4,4)--(4,3)--(5,2)--cycle;

\end{tikzpicture}
         \caption{The cut is added to the outer approximation $\widecheck{Q}_2$ and the vertices to the inner approximation $\widehat{Q}_2$.}
     \end{subfigure}
     \hfill
     \begin{subfigure}[t]{0.47 \columnwidth}
         \centering
         \begin{tikzpicture}[scale=0.8]
\input{figure_tikz/tikz_settings}
\input{figure_tikz/Components/grid}
\input{figure_tikz/Components/poly_flots}
\input{figure_tikz/Components/poly_approx_dashed}
\draw[dark-blue, very thick, rounded corners=0.5pt](5.75,1.25)--(7.75,1.75)--(7.5,5)--(5,5.75)--(3.25,3.75)--cycle;

\draw[my-red, fill=my-red, very thick] (7,5) circle(0.07cm);
\draw[my-red, fill=my-red, very thick] (4,4) circle(0.07cm);
\draw[my-red, fill=my-red, very thick] (5,5) circle(0.07cm);
\draw[my-red, fill=my-red, very thick] (5,2) circle(0.07cm);
\draw[my-red, fill=my-red, very thick] (4,3) circle(0.07cm);
\draw[my-red, very thick, rounded corners=0.5pt](7,5)--(5,5)--(4,4)--(4,3)--(5,2)--cycle;

\node[star, star points=5, draw, star point ratio=2.5, scale=0.4, fill=yellow] () at (5,2){};

\end{tikzpicture}
         \caption{After enough iterations, the value of the solutions given by the two approximations coincide and the method stops.}
     \end{subfigure}
     \caption{Illustration of the Dantzig-Wolfe-Fenchel decomposition}
    \label{illustration_DWF}
\end{figure}

Although this new decomposition method uses ideas taken from both the Dantzig-Wolfe and the Fenchel decompositions, the way in which they operate yields a few remarkable observations and interpretations that are worth describing:
\begin{itemize}
    \item Intuitively, the inner and outer approximations constructed disagree on the location of the boundary of $ Q_2 $ on the segment $ (\widecheck{x}, \widehat{x}) $. The separation problem finds the exact position of the frontier and gives information to the two approximations so that they can approximate exactly this part of the frontier.
    \item Another way of looking at the method is to say that it gradually improves the point $ \widehat{x} $. At each iteration, the algorithm tests a direction of potential improvement $ \widecheck{x} - \widehat{x} $. The method then finds either new vertices of $ Q_2 $ allowing to improve $ \widehat{x} $ or a facet of $ Q_2 $ passing through $ \widehat{x} $ proving that it is not possible to improve $ \widehat{x} $ in this direction. This interaction between an exterior point and an interior point of $ Q_2 $ is reminiscent of the \textit{in-out} separation proposed for the Benders decomposition \citep{ben2007acceleration}.
    \item Compared to a classic Fenchel decomposition, this method concentrates its cut generation around the point $ \widehat{x} $. It thus refines the knowledge of the polyhedron $ Q_2 $ around this point. On the other hand, a classical Fenchel decomposition might spend multiple iterations searching for cuts in regions of the solution space that end up being far from the optimum. 
    \item Compared to a classic Dantzig-Wolfe method, this method devotes more time to the resolution of its sub-problem which allows it to generate a greater number of vertices to add to the master problem.
\end{itemize}

\subsection{Degeneracy}

The Dantzig-Wolfe decomposition is known to present convergence issues when its master problem is highly degenerate. Our experiments reveal that the method presented in this section does not suffer from the same issues. In the following, we give a partial theoretical explanation of the absence of degeneracy issues in the new method.

A linear program is said to be degenerate when it has multiple dual optima. We have seen in Section \ref{sec:presentation-DW} that each dual solution of the Dantzig-Wolfe master problem implies a bound on the value of its objective function $cx \leq c_0$ that certifies the optimality of its current primal solution on the solution set $LR_1 \cap \widehat{Q}_2$. Thus, in order to improve the solution of the master problem, one needs to be able to separate each of these dual solutions from the polyhedron $LR_1 \cap Q_2$. However, the sub-problem of the Dantzig-Wolfe decomposition may only separate one such dual vector which explains why the Dantzig-Wolfe decomposition is so affected by degeneracy.

The new proposed Dantzig-Wolfe-Fenchel decomposition seems to be unaffected by the degeneracy of its Dantzig-Wolfe master problem. One possible explanation of this behavior is supported in the following observation and proposition. First, let us remark that the new decomposition neither computes nor uses any dual information, therefore showing no sensitivity to the quality of the duals. Second:
\begin{proposition}
\label{claim_degeneracy}
At each iteration, the sub-problem invalidates either all the dual solutions of the Dantzig-Wolfe master program or none of them.
\end{proposition}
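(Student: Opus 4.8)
The plan is to reduce the statement to a single inequality whose sign is the \emph{same} for every optimal dual solution of the master. Fix an optimal dual solution $(u,\pi,\pi_0)$ of $(DW)$. As recalled in Section~\ref{sec:presentation-DW}, $u\ge 0$, the vector $\pi$ satisfies $uA_1+\pi=c$, and the certified cut is $\pi x\le\pi_0$ with $\pi_0=\pi\widehat{x}$ (tightness follows from strong duality together with complementary slackness $u(b_1-A_1\widehat{x})=0$). By definition this dual is \emph{invalidated} precisely when one of the vertices $v$ of $Q_2$ returned by the separation sub-problem satisfies $\pi v>\pi\widehat{x}$. Hence everything reduces to controlling the quantity $\pi v-\pi\widehat{x}$ over the returned set $LS$, uniformly in $\pi$.

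The crux is the observation that $\pi(\widecheck{x}-\widehat{x})>0$ for \emph{every} optimal dual $\pi$. Indeed, $\pi(\widecheck{x}-\widehat{x})=c(\widecheck{x}-\widehat{x})-uA_1(\widecheck{x}-\widehat{x})$; the first term is strictly positive because we sit at a non-terminal iteration, where the lower and upper bounds differ, so $c\widecheck{x}>c\widehat{x}$, while the second term is nonpositive because $u\ge 0$, $A_1\widecheck{x}\le b_1$ and $uA_1\widehat{x}=ub_1$ (complementary slackness again). Thus $\pi(\widecheck{x}-\widehat{x})\ge c(\widecheck{x}-\widehat{x})>0$, and --- this is the whole point --- the sign is independent of the chosen dual solution. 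Geometrically, $\widecheck{x}-\widehat{x}$ is a common ascent direction for all dual objectives simultaneously.

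Next I would bring in the geometry of the directional normalization. Writing $p$ for the point where the segment $(\widecheck{x},\widehat{x})$ meets the frontier of $Q_2$, the dual $(D)$ of the separation problem for this normalization expresses $p=z\widehat{x}+(1-z)\widecheck{x}$ as a convex combination $p=\sum_{v\in LS}\mu_v v$ of the returned vertices, with $1-z\ge 0$ and $1-z>0$ exactly when $p\neq\widehat{x}$. Since $p-\widehat{x}=(1-z)(\widecheck{x}-\widehat{x})$, we get $\pi p-\pi\widehat{x}=(1-z)\,\pi(\widecheck{x}-\widehat{x})$, so by the crux its sign equals that of $1-z$ for every dual at once. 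If $p\neq\widehat{x}$, then $\pi p>\pi\widehat{x}$ for all $\pi$, and since $\pi p=\sum_v\mu_v\pi v$ is a convex combination, at least one returned vertex satisfies $\pi v\ge\pi p>\pi\widehat{x}$: \emph{every} dual is invalidated. If $p=\widehat{x}$, then $\pi p=\pi\widehat{x}$ for all $\pi$, and the claim is that \emph{none} is invalidated.

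The case $p=\widehat{x}$ is where I expect the real work, and I would flag it as the main obstacle. Here $\widehat{x}\in\partial Q_2$ is a convex combination $\widehat{x}=\sum_v\mu_v v$ of the returned vertices, so $\pi\widehat{x}=\sum_v\mu_v\pi v$; this averaging alone does \emph{not} preclude some $\pi v>\pi\widehat{x}$ compensated by another $\pi v'<\pi\widehat{x}$, which would invalidate that single dual and destroy the dichotomy. I would attempt to close this gap by showing that the returned vertices span exactly the minimal face of $Q_2$ containing $\widehat{x}$ and that every optimal dual is constant on that face (equivalently, that $\widehat{x}$ maximizes each $\pi$ over all of $Q_2$, not merely over $\widehat{Q}_2$), so that $\pi v=\pi\widehat{x}$ for each returned vertex and each dual. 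The reassuring clean sub-case is when $\widehat{x}$ is a vertex of $Q_2$: then $\widehat{x}=\sum_v\mu_v v$ forces $LS=\{\widehat{x}\}$, so no dual can be invalidated. Away from this boundary situation the dichotomy follows directly from the sign identity above, which is the essential mechanism behind the proposition.
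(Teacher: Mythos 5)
Your first case ($p\neq\widehat{x}$) is correct and is in substance the paper's own argument: the paper simply notes that the farthest point $x^*$ on the segment belongs to $LR_1\cap Q_2$ (as a convex combination of $\widecheck{x},\widehat{x}\in LR_1$) and satisfies $cx^*>c\widehat{x}$ at a non-terminal iteration, hence contradicts every certificate simultaneously. Your inequality $\pi(\widecheck{x}-\widehat{x})\ge c(\widecheck{x}-\widehat{x})>0$ is an explicit dual-side rendering of that mechanism, and your extraction of a violating vertex from the convex combination $p=\sum_v\mu_v v$ is a correct (and in fact stronger) refinement down to the returned vertex list $LS$.

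The gap you flag in the case $p=\widehat{x}$ is real under your reading, but it is an artifact of having adopted a stronger notion of ``invalidate'' than the paper uses. The paper reasons at the level of the improving point: a dual solution is invalidated when the point found by the sub-problem violates the bound $cx\le c\widehat{x}$ that it certifies; when the farthest point coincides with $\widehat{x}$, that point lies in $LR_1\cap\widehat{Q}_2$, satisfies every such bound, and therefore invalidates none of them --- the case closes in one sentence, with no analysis of faces. Under your vertex-level reading the dichotomy genuinely need not hold: a returned vertex $v$ on the generated face through $\widehat{x}$ may satisfy $\pi v>\pi\widehat{x}$ for one optimal dual and not for another, since each cut $\pi x\le\pi_0$ is only guaranteed valid for $\widehat{Q}_2$, not for $Q_2$. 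For the same reason your proposed repair cannot work in general: requiring that every optimal dual be constant on the minimal face of $Q_2$ containing $\widehat{x}$ --- equivalently, as you note, that $\widehat{x}$ maximize each $\pi$ over all of $Q_2$ --- would mean each dual cut is already valid for $Q_2$, which is exactly the situation at convergence of the Dantzig-Wolfe scheme and fails at a generic iteration. The correct move is therefore not to close your sub-case but to weaken the semantics back to the paper's point-level statement (the sub-problem's point $x^*$ either violates all certified bounds or none), under which your case analysis terminates immediately.
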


\begin{proof}
The sub-problem finds the farthest point in $Q_2$ along the segment $ (\widehat{x}, \widecheck{x}) $. On the one hand, if this farthest point coincides with $\widehat{x}$ then it does not invalidate any of the bounds implied by the dual solutions because $\widehat{x}$ belongs to $LR_1 \cap \widehat{Q}_2$. On the other hand, suppose the farthest point $x^*$ does not coincide with $\widehat{x}$. First, by construction, $x^*$ belongs to $Q_2$ but also to $LR_1$ because it is a convex combination of $\widehat{x}$ and $\widecheck{x}$, both belonging to $LR_1$. Second, assuming that $\widecheck{x}$ has a strictly better objective value than $\widehat{x}$ (which is always the case except when the method is about to terminate) then $x^*$ also has a strictly better objective value than $\widehat{x}$. Thus, it must invalidate all the bounds implied by the dual solutions of the Dantzig-Wolfe master problem (remember that these bounds certify that no better solution than $\widehat{x}$ exists). \qed
\end{proof}

Therefore, whether the Dantzig-Wolfe master problem admits multiple dual optima or not does not influence the algorithm's capacity to find a strictly improving point.

Although we presented all the decomposition methods as if only one polyhedron was decomposed at once (e.g. a block of a block diagonal matrix), in practice several polyhedra are decomposed at the same time (e.g. all the blocks of a block diagonal matrix). In this case, Proposition \ref{claim_degeneracy} does not hold. However, the  ideas discussed in its proof may still impact positively the practical computations. In any case, the decomposition method still does not use any dual information which renders it oblivious to the number of dual solutions of its Dantzig-Wolfe master problem.

\section{Application to the unsplittable flow problem}
\label{sec:application}

In the unsplittable flow problem (UFP), one is given a weighted directed graph $\mathcal{G} = (V, A, c)$ where $c_a$ is the capacity of the arc $a$ for every $a\in A$. We are also given a family $K$ of commodities, each composed of an origin $o(k)$, a destination $d(k)$, and a demand $D_k$, for every $k\in K$. Each commodity has to be routed from its origin to its destination through an unique path. We consider the problem where the capacity constraints are soft, meaning that they can be violated at a certain unit penalty. The objective is to design routing paths for every commodity on the network so as to minimize the sum of the violations of the arcs' capacities. 

The UFP is an extensively studied NP-hard variant of the classic maximum-flow problem. It has multiple applications, as for instance in telecommunication networks (\emph{e.g.} optical networks, telecommunication satellites \citep{rivano2002lightpath, lamothe2021dynamic}), and logistics \citep{farvolden1993primal}. The early work of \citet{belaidouni2007minimum} studied the UFP from a polyhedral perspective, proposing cutting planes to strengthen the linear relaxation of a three-index model that uses variables $x_{ijk}$ for every arc $(i, j)$ in the network and every commodity $k$ to be transported. \citet{park2003integer} also strengthen the linear relaxation by applying the Dantzig-Wolfe decomposition to the capacity constraints of the UFP. The resulting relaxation is as strong as if all the inequalities valid for the capacity constraints of the problem were added. Thus, faster decomposition methods able to compute the Dantzig-Wolfe linear relaxation could yield improvements in the resolution of the UFP.

We will consider an arc-path formulation where the meaning of the variables is the following:
\begin{itemize}
    \item $x_{pk}$ indicates whether commodity $k$ uses path $p$ to push its flow,
    \item $\Delta_a$ represents the overflow on arc $a$.
\end{itemize}

In addition to the decision variables, we also denote, for a given commodity $k\in K$, $P_k$ the set of all $o(k)$-$d(k)$-paths in $G$. For every $p\in P_k$ and arc $a\in A$, we define a constant $\alpha_{ap}$ that takes the value 1 iff $p$ uses the arc $a$. The path formulation of the UFP is the following:
\begin{subequations}
\begin{alignat}{4}
&\min_{x_{pk},\Delta_a} && \sum_{a \in A} \Delta_a & ~~ &  \\
&\text{subject to} ~~ && \sum_{p \in P_k} x_{pk} = 1 & & \forall k \in K, \label{eq: convex_p}\\
&&&\sum_{k \in K} \sum_{p \in P_k, a \in p} x_{pk} D_k \leq c_a + \Delta_a & & \forall a \in A,  \label{eq: capacity_const_p}\\
&&& x_{pk} \in \{0,1\}, ~ \Delta_a \in \mathbb{R}^+ & & \forall p \in P_k, ~ \forall k \in K, ~ \forall a \in A
\end{alignat}
\end{subequations}

The objective function minimizes the sum of the overflows on the arcs. Equation (\ref{eq: convex_p}) ensures that exactly one path is chosen for each commodity. Equation (\ref{eq: capacity_const_p}) corresponds to the soft capacity constraints. It ensures that any overflow on an arc $a$ is recorded on the corresponding variable $\Delta_a$. The fact that $ x_{pk} \in \{0,1\}$ ensures that the flow is unsplittable.

The polyhedron associated with the capacity constraints does not have the integrality property and its relaxation can thus be tightened with any of the previously discussed decomposition methods. If we denote $f_a^k = \sum_{p \in P_k, a \in p} x_{pk}$, this polyhedron can be written as follows:

$$ \left \{f_a^1, ..., f_a^k \in [0, 1], o_a \in \mathbb{R}^+ \bigg | \sum_{k \in K} f_a^k D^k \leq c_a + o_a \right \}. $$

Studies have been carried out on the structure, the cut selection, and the strengthening of the linear relaxation of this type of polyhedron by \citet{Marchand19990} as well as in the more general framework of linear programs in mixed variables \citep{dash2011mixed, fukasawa2011exact, chvatal2013local}. Moreover, optimization methods on this polyhedron have been studied by \citet{buther2012reducing, lin2011exact, zhao2014approximation, he2019encoding, liu2017exact}.

In the following, we present a specialized normalization for this polyhedron. This normalization guarantees the generation of facets and will be used in our implementation of the Fenchel decomposition and as secondary normalization in the new procedure to solve the Fenchel sub-problem presented in Section \ref{Fenchel_sp_normalization}. Then, we describe our implementation of the oracle that optimizes a linear function on the polyhedron associated with the capacity constraints of the UFP.

\subsection{Natural normalization for unsplittable flows}
\label{normalization_naturelle}

Let $ \widecheck{f} = (\widecheck{f}^k)_{k \in K} \in [0, 1] ^ {| K |} $ be a flow distribution for each commodity which induces an overflow $ \widecheck{o} $ on a given arc. In this section, we assume that the arc is fixed and will therefore drop the arc index for the sake of simplicity. In the context of unsplittable flows, vertices of the polyhedron $Q_2$ correspond to commodity patterns which will be indexed by a superscript $g$. A naturally occurring question is: how does this distribution break down into a combination of commodity patterns inducing a minimum capacity overflow? This question can be solved using the following linear program:
\begin{subequations}
\begin{alignat*}{3}
(D) \quad &\min_{\lambda^g, z} && z \\
&\text{subject to} \quad && \sum_{g \in G} \lambda^g f^g = \widecheck{f}\\
&&& \sum_{g \in G} \lambda^g o^g = \widecheck{o} + z\\
&&& \sum_{g \in G} \lambda^g = 1\\
&&& \lambda^g \in \mathbb{R}^+, ~ z \in \mathbb{R}^+ \quad \forall g \in G
\end{alignat*}
\end{subequations}
where $ \lambda^g $ is the coefficient in the decomposition associated with a commodity pattern $ f^g $ inducing an overflow $ o^g $.

Now the dual of this decomposition program is the following program:

\begin{subequations}
\begin{alignat*}{3}
(S) \quad &\max_{\pi, \pi_o \pi_0} && \pi \widecheck{f} + \pi_o \widecheck{o} - \pi_0  \\
&\text{subject to} \quad && \pi f^g + \pi_o o^g \leq \pi_0, \quad \forall (f^g, o^g) \in Q_2 \\
&&& \pi_o \leq -1 \\
&&& \pi, \pi_0, \pi_o \in \mathbb{R}
\end{alignat*}
\end{subequations}

This program corresponds exactly to the problem ($ S $) of separating the point $ \widecheck{x} = (\widecheck{f_1}, ..., \widecheck{f}_{|K|}, \widecheck{o}) $ with a constraint of normalization imposing that the coefficient $ \pi_o $ associated with the overflow variable satisfies $ \pi_o \leq -1 $. This normalization constraint is what we will call the natural normalization for the unsplittable flow problem. This normalization is very close to a particular case of directional normalization for the direction $ \widehat{x} - \widecheck{x} = (0, ..., 0, 1) $. Just like directional normalization, the natural normalization guarantees the generation facets because it is imposed using a single linear constraint.

\subsection{Knapsack oracle resolution}

All the decomposition methods presented in this work assume that there exists an efficient algorithm capable of optimizing a linear function on the polyhedron $ Q_2 $. In this section, we detail the problem solved by the oracle in the context of unsplittable flows.

In the version of the unsplittable flow problem that we are studying, the capacity constraints do not require that the flow of commodities respect the capacities $ c_a $ of the arcs. However, the overflow must be stored in a variable $ o_a $. Thus, the polyhedron of variables satisfying the soft capacity constraint associated with the arc $ a $ is written:
$$\left\{(f_a^k \in \{0, 1\})_{a \in A, k \in k}, o_a \in \mathbb{R}^+ \Bigg| \sum_{k \in K} f_a^k D^k \leq c_a + o_a\right\}.$$
The optimization of a linear function whose coefficients are $(\pi^k)_{k \in K}$ and $-\pi_o$ on this polyhedron can be written as follows:

\begin{subequations}
\begin{alignat*}{3}
(O_a) \quad &\max_{f_a^k, o_a} && \sum_{k \in K} \pi^k f_a^k - \pi_o o_a  \\
&\text{subject to} \quad && \sum_{k \in K} f_a^k D^k \leq c_a + o_a \\
&&& f_a^k \in \{0, 1\}, ~ o_a \in \mathbb{R}^+
\end{alignat*}
\end{subequations}

This problem can be solved as a sequence of two 0-1 knapsack problems using a case disjunction. This method was presented by \citet{buther2012reducing} and is recalled in \ref{annex_knapsack}. In our experiments, we use the MINKNAP algorithm proposed by \citet{pisinger1997minimal} to solve the two associated knapsack problems.

\section{Experimental study}
\label{sec:results}

In this section, we present an experimental comparison of different decomposition methods. The datasets and code used in this section can be accessed at \url{https://github.com/TwistedNerves/decomposition_paper_code}. The code was written in Python 3 and the experiments carried on an Intel Core i9-9900K 3.60 GHz $\times 16$ cores CPU, 60 Gbit of RAM, running Ubuntu 20.10.

\subsection{Datasets}

An instance of the unsplittable flow problem is composed of a graph and a list of commodities. The method used to create instances in our experiments is the one presented in \citet{lamothe2021randomized}. All the graphs used are strongly connected random graphs. To create demands for the commodities, \citet{lamothe2021randomized} used two formulas. In this work, we use the formula that creates mainly commodities with large demands because it tends to create instances that are harder to solve. Moreover, in each instance, all the commodities can be unsplittably routed without exceeding the arc capacities. Therefore, the lower bound given by the linear relaxation is optimal. In order to create an optimality gap in the instances, we slightly modify the capacities of some arcs as follows a number of times equal to 100 times the number of nodes:

\begin{itemize}
    \item Randomly select the origin of a commodity.
    \item Randomly select two arcs coming out of this origin.
    \item Add 1 to the capacity of one arc and subtract 1 from the ability of the other arc.
\end{itemize}

Because of the way instances are created, all outgoing arcs from each origin node are saturated in the solutions without overflow while the other arcs are often non-saturated. Therefore, in most cases, transferring some capacity between outgoing arcs of origins does not change the value of the linear relaxation. On the other hand, this transfer of capacity can have an impact on the value of the best unsplittable solution. Indeed, there is no longer necessarily a combination of commodities whose sum of demands is exactly equal to the capacity of each arc. In this case, the best unsplittable solution has a non-zero overflow.

Another change made to the instances is that the commodities have only access to a restricted set of paths to push their flow. The restricted set of paths of a commodity is chosen to be the k-shortest paths from the origin to the destination of the commodity with k = 4. Because this study explores the strengthening of the linear relaxation of the unsplittable flow problem through its capacity constraints, this modification should not change the relative behavior of the tested algorithms but does make the instances much simpler to solve which enables the testing of the different algorithms on larger instances.

\paragraph{The datasets} 

Three different datasets are used during the experiments in which ten instances are generated for each value of the varying parameter.
\begin{itemize}
    \item \textit{Low maximum demand} dataset: this dataset considers strongly connected random graphs from 50 nodes to 145 nodes. The maximum commodity demand is set at $ \widecheck{d}_{max} = $ 100 and the arc capacity at 1000. This choice of maximum demand implies that a large number of commodities can pass through each arch. However, in our tests, the optimal solution for these instances often does not contain overflow. Therefore, these instances do not contain an optimality gap. We hypothesize that the large number of commodities allows them to rearrange themselves to exactly fill the capacity of each arc.
    \item \textit{High maximum demand} dataset: this dataset considers strongly connected random graphs from 145 nodes to 1000 nodes. The maximum commodity demand is set at $ \widecheck{d}_{max} = $ 1000 and the arc capacity at 1000. Because of this maximum demand choice, these instances contain only a small number of commodities. However, they generally have an optimality gap which allows us to study the evolution of the lower bounds given by the algorithms.
    \item \textit{Size of capacities} dataset: this dataset considers strongly connected random graphs of 70 nodes. The maximum demand of the $ \widecheck{d}_{max} $ commodities is fixed at $ 1/10 $ of the common capacity of the arcs which varies from 100 to 100,000. The knapsack problem is known to have algorithms that are pseudo-polynomial in the capacity of the knapsack. One such algorithm is the MINKNAP algorithm we use. In the case of unsplittable flows, the capacity of the knapsack corresponds to the capacity of the arcs. The instances of this dataset all have the same structure (same graph size, same size of commodities relative to the capacity of the arcs) but varying arc capacities. This impacts the resolution time of the MINKNAP algorithm.
\end{itemize}

\subsection{Comparison of Dantzig-Wolfe variants}
\label{DW_comparison}

A large number of works have proposed variations and improvements to Dantzig-Wolfe decomposition. Indeed, the method presented in Section \ref{sec:presentation-DW} sometimes suffers from a slow convergence which has been associated with the following observations \citep{pessoa2013out}:
\begin{itemize}
    \item \textbf{Dual oscillations:} the dual variables $\pi$ used to generate the vertices of $ Q_2 $ perform large oscillations and do not converge monotonically toward their optimal value.
    \item \textbf{The tailing-off effect:} during the last iterations, the space of the dual solutions is only marginally reduced and the dual bound progresses very slowly.
    \item \textbf{Degenerate primal and equivalent dual solutions:} The master problem ($ DW $) is regularly degenerate because it has several dual optimal solutions. The method iterates between equivalent dual solutions without making progress on the value of the objective function.
\end{itemize}
In order to overcome these difficulties, stabilization methods for the dual variables have been considered. These methods can be classified into three categories \citep{pessoa2013out}:
\begin{itemize}
    \item \textbf{Penalization:} the penalization methods are best interpreted by considering the dual of the problem ($ DW $). In order to stabilize the dual variables $ \pi $, a penalty $ f (\| \pi - \bar{\pi} \|) $ is added to the objective function of the dual. In this penalty, $ f() $ is an increasing function, which pushes $ \pi $ to stay close to a value $ \bar{\pi} $ which evolves slowly during the algorithm. Typically, $ \bar{\pi} $ is one of the values taken by the dual variables during the previous iterations. A widely studied special case is to penalize proportionally to $ \| \pi - \bar{\pi} \|_2^2 $, which is done in the \textit{Bundle} methods \citep{briant2008comparison}.
    \item \textbf{Smoothing:} in smoothing methods, the dual variables $ \pi $ of the problem ($ DW $) are not used directly in the sub-problem in order to generate new vertices of $ Q_2 $. We note, for the iteration $ j $ of column generation, $ \pi_j $ the values of the variables resulting from the dual of the problem ($ DW $) and $ \bar{\pi}_j $ the values used in the sub-problem. A smoothing method proposed by \citet{neame2000nonsmooth} uses the following formula: $ \bar{\pi}_j = \alpha \bar{\pi}_{j-1} + (1 - \alpha) \pi_j $. This method amounts to adding an \textit{momentum} effect to the dual variables. Another method, proposed by \citet{wentges1997weighted}, performs a convex combination with a fixed dual value $ \bar{\pi} $, i.e. $ \bar{\pi}_j = \alpha \bar{\pi} + (1 - \alpha) \pi_j $.
    \item \textbf{Centralization:} The idea of centralization methods is that it is more efficient to use in the sub-problem dual values located inside the dual polyhedron rather than on an extreme vertex of the dual polyhedron. On the other hand, such interior points are more expensive to compute than extreme points. The interior point used in the Primal-Dual Column Generation \citep{gondzio2013new} is obtained by approximately solving the problem ($ DW $) by an interior point method. Another classic point is the analytical center used in the analytical center cutting plane method \citep{goffin2002convex}.
\end{itemize}

In order to have a suitable comparison for the decomposition methods we proposed in this work, we experimentally compared the following three variations of Dantzig-Wolfe decomposition:

\textbf{DW:} Dantzig-Wolfe decomposition method. No stabilization of the dual variables is used. The lower bounds are computed using the dual variables and the value of the solution of the knapsack sub-problems.

\textbf{DW-momentum:} similar to the \textit{DW} method except that the dual variables are stabilized by smoothing using the formula of \citet{neame2000nonsmooth}, $ \bar{\pi}_t = \alpha \bar{\pi}_{t-1} + (1 - \alpha) \pi_t $ with the coefficient $\alpha$ set to $ 0.8 $.

\textbf{DW-interior-point:} similar to the \textit{DW} method except that the Dantzig-Wolfe master problem is solved with an interior point method in order to return a non-optimal but centered solution. To that end, we ask the \citep{gurobi} solver to solve the linear program using an interior point method with a precision of $ 10^{- 3} $ and without using its \textit{crossover} method. However, the first time the sub-problem fails to generate a new negative reduced cost variable, the solver Gurobi is reset to its default settings to ensure an exact computation of the last reduced costs. With the default parameters, the generation of columns is no longer stabilized.

These methods are compared in Figure \ref{Compare_DW_70}. For the rest of the experiments, we will use the variation based on the interior point solver as it always returns the best results in our tests.

 \begin{figure}[ht]
\centering
\begin{minipage}{0.7\textwidth}
     \centering
     \includegraphics[width=\textwidth, trim={0 0 45 36},clip]{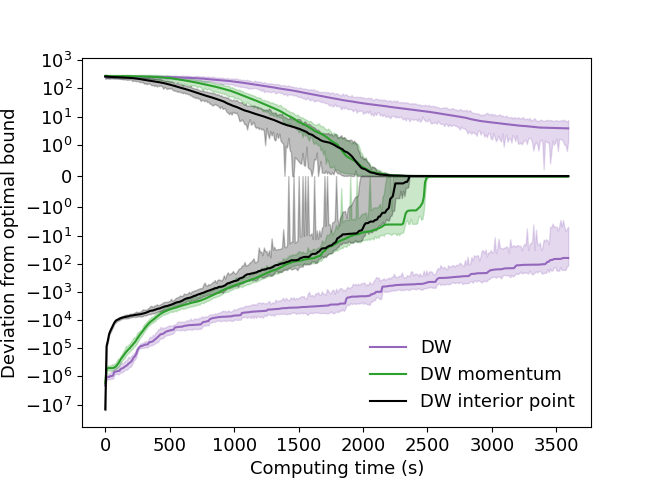}
     \caption{Compararison of Dantzig-Wolfe decomposition variations, \textit{Low maximum demand} dataset, 70 nodes}
     \label{Compare_DW_70}
\end{minipage}
 \end{figure}

\subsection{The decomposition methods studied}

In the following, we experimentally compare the following decomposition methods:

\textbf{Fenchel:} Fenchel decomposition method. The cuts generated are added to the linear relaxation while the generated vertices are added to a Dantzig-Wolfe formulation. The Fenchel sub-problem is solved with the natural normalization presented in Section \ref{normalization_naturelle}. Therefore, the two master problems do not act in tandem.

\textbf{DW-Fenchel:} method combining the Fenchel and Dantzig-Wolfe decompositions presented in Section \ref{hybrid_DW_Fenchel}, the cuts generated are added to the linear relaxation while the generated vertices are added to a Dantzig-Wolfe formulation. The Fenchel sub-problem is solved with directional normalization with the optimal point of the Dantzig-Wolfe formulation as the interior point. The use of this normalization couples the two formulations.

\textbf{DW-Fenchel-iterative:} similar to the \textbf{DW-Fenchel} method except that the Fenchel sub-problem is solved using the iterative method presented in Section \ref{iterative_sub_problem}.

\textbf{DW-interior-point:} Dantzig-Wolfe decomposition introduced in Section \ref{DW_comparison} where the master problem is solved with an interior point method in order to return a non-optimal but centered solution.

\subsection{Algorithms' parameters}

\textit{Authorized paths} Because the focus of this work is on the capacity constraints and not on how to generate the paths for each commodity, each commodity is restrained to a small set of allowed paths. This set is made up of the four shortest paths between the origin and destination of the commodity as well as the path used to create the commodity in the method of \citet{lamothe2021randomized}.

\textit{Algorithm termination condition} The decomposition methods considered are stopped when the absolute difference between their bounds is $ 10^{- 3} $.

\textit{Pre/post-processings for the sub-problem of Fenchel:} Solving directly a Fenchel subproblem is sometimes too computationally expensive to be integrated into a decomposition method. However, the resolution time of this subproblem can be greatly reduced with pre/post-processing steps. Indeed, \citet{boccia2008cut} showed that it is possible to solve the Fenchel separation problem by focusing on a sub-polyhedron of $Q_2$ of far lesser dimension which decreases the computing time. However, the generated cut is not directly valid for $Q_2$ and one must use a \textit{lifting} procedure to create a cut valid for $Q_2$. These concepts are explained in \ref{pre_post_traitement}. Together, the techniques of \textit{Dimensionality reduction} and \textit{Lifting} induce a drastic decrease in the resolution time of the Fenchel subproblem. This fact was confirmed by our experiments and the results we present in our experimental study reflect this algorithmic choice.

\subsection{Experimental results}

We now present the results of our experimental campaign for the different decomposition methods presented in this work. In each figure, we display the evolution of the lower and upper bounds achieved by the algorithms as a function of the computational time (in seconds). Note that all the displayed values are not directly the bounds but their deviation from the value of an optimal solution of the Dantzig-Wolfe reformulation. The plotted curves represent the average results of the algorithms aggregated on instances using the same parameters while the confidence intervals at $ 95 \% $ for the mean are plotted in semi-transparency around the main curve.
 A problem encountered when generating these curves is that the algorithms only return bounds at the end of each of their iterations but these iterations take a variable time for the same algorithm depending on the instance. It is therefore not possible to directly aggregate the curves using the points given at the end of each iteration because they do not correspond to the same computing time. To obtain points on which we can appropriately average the values, the points defining the curves are replaced with points sampled every ten seconds by considering that the bounds evolve linearly between two iterations. 
 The confidence intervals are created using the statistical method called Bootstrapping with a number of resamplings equal to 1000. Because the Bootstrapping method is applied independently for every ten seconds of the curves the resulting confidence intervals have jitters. These jitters can be interpreted as the uncertainty on the bound of the confidence intervals due to the Bootstrapping method.

\begin{figure}
\end{figure}

 \begin{figure}
\begin{minipage}{0.49\textwidth}
     \centering
     \includegraphics[width=0.7\textwidth]{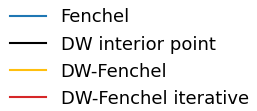}
     \caption{Legend of the Figures \ref{small_all_algo} to \ref{capacity_10000}}
     \label{Legende_decomposition}
\end{minipage}
\hfill
\begin{minipage}{0.49\textwidth}
     \centering
     \includegraphics[width=\textwidth, trim={0 0 45 36},clip]{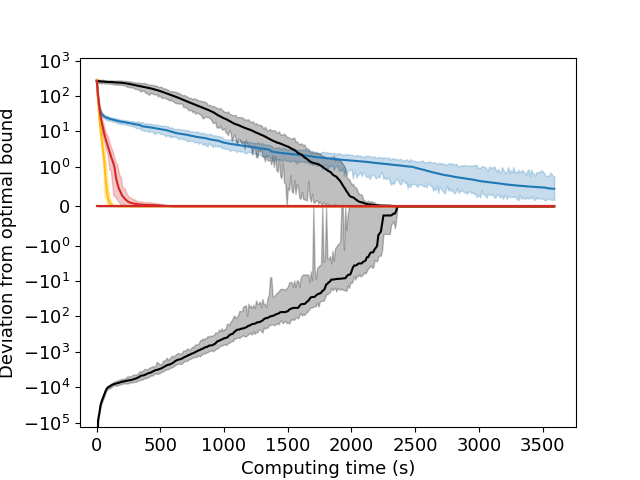}
     \caption{\textit{Low maximum demand} dataset, 70 nodes}
     \label{small_all_algo}
\end{minipage}
 \end{figure}

 \begin{figure}
\begin{minipage}{0.49\textwidth}
    \centering
    \includegraphics[width=\textwidth, trim={0 0 45 36},clip]{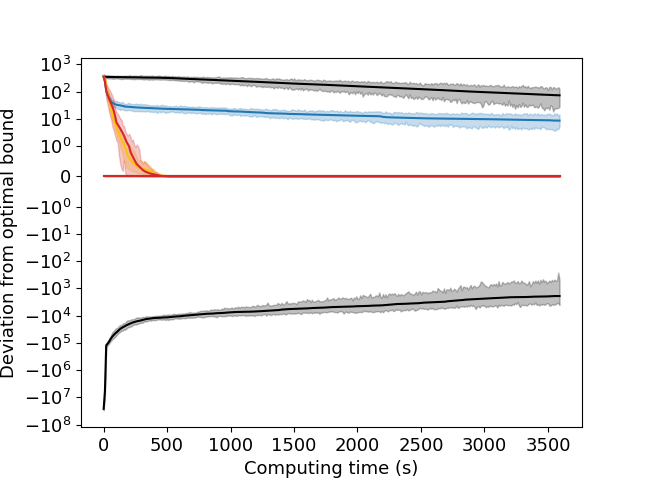}
    \caption{\textit{Low maximum demand} dataset, 90 nodes}
    \label{faible_demande_90}
\end{minipage}
\hfill
\begin{minipage}{0.49\textwidth}
     \centering
     \includegraphics[width=\textwidth, trim={0 0 45 36},clip]{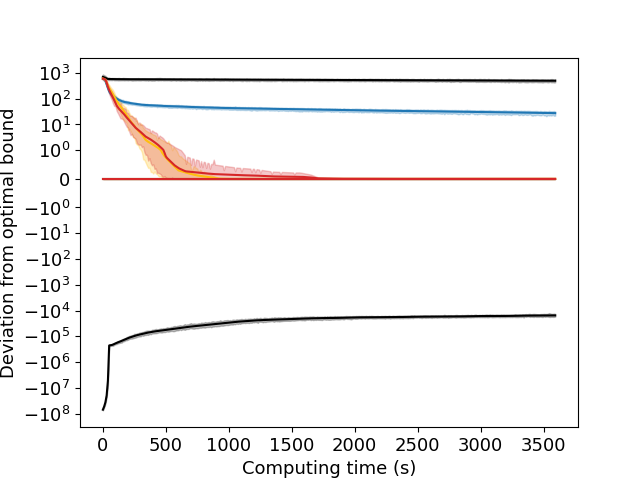}
     \caption{\textit{Low maximum demand} dataset, 145 nodes}
     \label{faible_demande_145}
\end{minipage}
 \end{figure}

\begin{figure}
\begin{minipage}{0.49\textwidth}
     \centering
     \includegraphics[width=\textwidth, trim={0 0 45 36},clip]{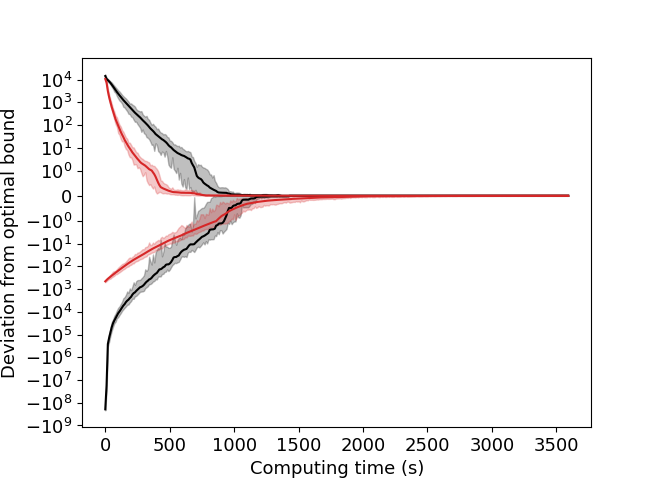}
     \caption{\textit{High maximum demand} dataset, 250 nodes}
     \label{forte_demande_250}
\end{minipage}
\hfill
\begin{minipage}{0.49\textwidth}
     \centering
     \includegraphics[width=\textwidth, trim={0 0 45 36},clip]{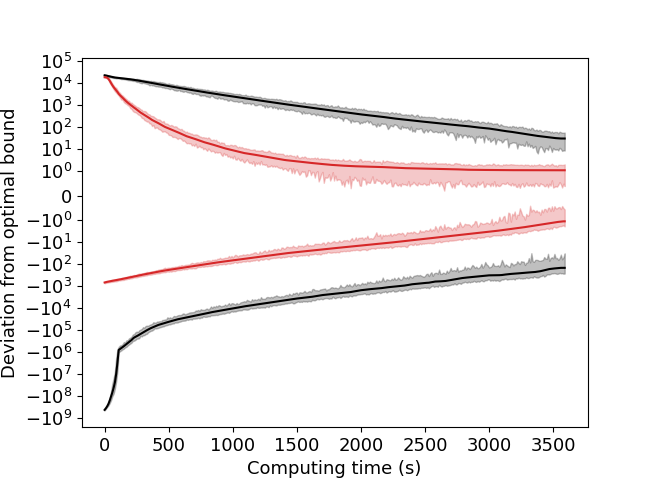}
     \caption{\textit{High maximum demand} dataset, 400 nodes}
     \label{forte_demande_400}
\end{minipage}
\end{figure}

\begin{figure}
\begin{minipage}{0.49\textwidth}
     \centering
     \includegraphics[width=\textwidth, trim={0 0 45 36},clip]{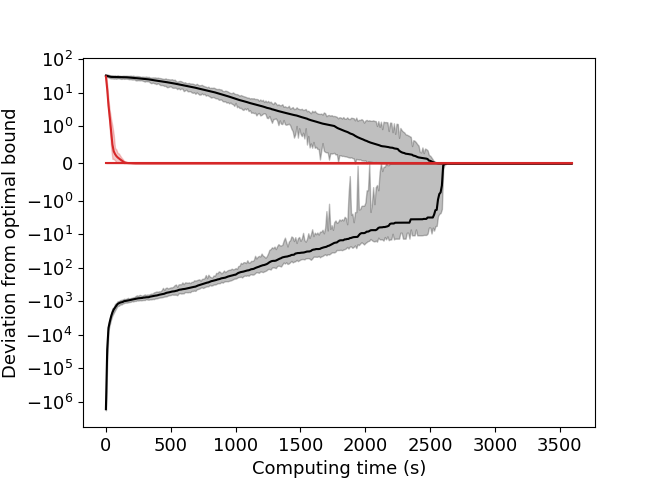}
     \caption{\textit{Size of commodities} dataset, capacity 100}
     \label{capacity_100}
\end{minipage}
\hfill
\begin{minipage}{0.49\textwidth}
     \centering
     \includegraphics[width=\textwidth, trim={0 0 45 36},clip]{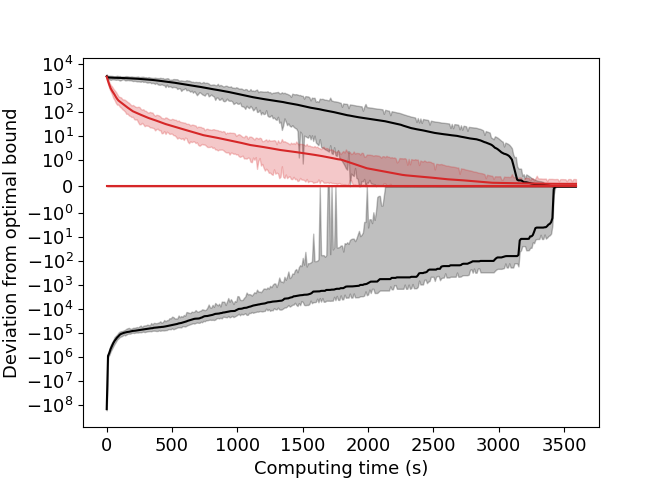}
     \caption{\textit{Size of commodities} dataset, capacity 10000}
     \label{capacity_10000}
\end{minipage}
\end{figure}

\textit{Solving the Fenchel sub-problem with a secondary normalization.} The new method of solving the Fenchel sub-problem presented in Section \ref{iterative_sub_problem} is used in the DW-Fenchel-iterative method. This method appears to be slightly slower than the DW-Fenchel method which uses a direct approach for the sub-problem. On the other hand, the direct approach sometimes fails to solve the sub-problem because of numerical instabilities which prevent the decomposition method from converging. For instances with 145 nodes of the \textit{Low maximum demand} dataset, this happens every 10 to 20 instances. The 10 instances presented in Figure \ref{faible_demande_145} did not suffer from instability in this set of experiments thus it does not appear in the figure. However, we were able to identify seeds for which the instability appears. Unfortunately, these seeds appear to be hardware-dependent thus researchers trying to reproduce the results will have to find their own seeds.

\textit{Impact of coupling the two master problems using directional normalization.} The DW-Fenchel and DW-Fenchel-iterative methods couple the Dantzig-Wolfe and Fenchel master problems using a directional normalization in their Fenchel sub-problem. The impact of this coupling can be studied by comparing these methods to the Fenchel method whose only difference is to use the natural normalization of the unsplittable flow problem in its sub-problem. As illustrated in Figure \ref{faible_demande_90}, one notes that all the methods are similar during the first iterations. However, the Fenchel method stalls rapidly. On the other hand, this is not the case with the methods using directional normalization which yield much better results. Our interpretation of this phenomenon is as follows. There are many equivalent optimal solutions of the linear relaxation of the unsplittable flow problem. When a cut is generated with the natural normalization, it only cuts a subset of these solutions, and the new solution to the Fenchel master problem is in a completely different place in the solution space. The method then fails to cut all the solutions because of this large number of symmetries. In contrast, the cuts generated using directional normalization focus on the optimal solution of the Dantzig-Wolfe master problem and try to prove its optimality. By focusing on a sub-part of the solution space, this method avoids the problem of symmetries which improves its convergence. This hypothesis is supported by the results of a preliminary study on a variation of the unsplittable flow problem where a path is favored. Indeed, because of the presence of a privileged path for each commodity, this variant does not have as many symmetries. In this context, there is a smaller difference between the methods based on the two normalizations.

\textit{Comparison between DW-interior-point and DW-Fenchel-iterative.} In the case of the \textit{low maximum demand} dataset where the number of commodities is high, the DW-Fenchel-iterative method behaves a lot better than the Dantzig-Wolfe methods. We assume that this is because the large number of commodities implies a greater degeneracy of the master problem which should less bother the DW-Fenchel methods. Indeed, this degeneracy seems to be the cause of the rather slow start of the Dantzig-Wolfe methods on these instances. In contrast, for the \textit{high maximum demand} dataset, the DW-interior-point and DW-Fenchel-iterative methods show more similar results. On these instances, the DW-Fenchel-iterative method shows a faster start of convergence, but slows down at the end of convergence, in particular for the lower bound.

\textit{Impact of capacity size.} The knapsack problem is known to have pseudo-polynomial resolution algorithms in the knapsack capacity such as the MIN\-KNAP algorithm that we use. In the case of unsplittable flows, this capacity of the knapsack corresponds to the capacity of the arcs. In Figure \ref{capacity_100} and \ref{capacity_10000}, we vary the capacities of the arcs. Note that the results for capacities of 1000 are given in Figure \ref{small_all_algo}. This variation in capacities impacts the computation time of the two methods making them slower. However, the DW-Fenchel-iterative method is much more impacted because the methods having a Fenchel sub-problem spend more time in their sub-problem than the Dantzig-Wolfe methods. This emphasizes the fact that having a fast optimization oracle for the polyhedron $Q_2$ is much more important for the methods based on a Fenchel sub-problem than those based on a Dantzig-Wolfe sub-problem.

\textit{General comments.} The new methods presented that couples the Dantzig-Wolfe and Fenchel decompositions shows very promising results. In particular, they seems to be far less affected by degeneracy than the Dantzig-Wolfe decomposition and possess better convergence than the Fenchel decomposition. On the other hand, they can end up converging slightly less rapidly than the Dantzig-Wolfe decomposition on instances where degeneracy is not an issue. The new methods are particularly effective when the optimization oracle ($ O $) can be implemented by a fast algorithm.

\section{Conclusions}

In this work we have revisited Dantzig-Wolfe and Fenchel decompositions for some hard combinatorial problems with block structures. We have provided geometrical and intuitive interpretations of several types of normalizations used in the literature to stabilize the sub-problems associated. This intuition has fueled the development of a novel methodology capable of coupling both decomposition approaches acting in tandem via a directional normalization. We have conducted a thorough computational campaign to demonstrate the effectiveness of the newly proposed approach for the unsplittable flow problem. We have observed that on problems suffering from high degrees of degeneracy, the new approach is superior to its competitors. Meanwhile, it is also competitive with the classical approaches on the less degenerate cases. We also proposed a new approach to solve the Fenchel subproblem with directional normalization by using an alternative normalization. We provide theoretical guarantees for the finiteness of this new approach for several classes of alternative normalizations and our experimental campaign revealed that it presents far less numerical instabilities.


A likely lead for future research will therefore be to investigate the performance of this new method in different contexts than the unsplittable flow problems. Moreover, one of the central points of this new method is the use of directional normalization in the Fenchel sub-problem. It would be interesting to use this normalization inside other decomposition methods.


\printbibliography

\newpage

\appendix

\section{Proof of the finite convergence for the iterative resolution of the Fenchel sub-problem}
\label{proof_subproblem}

In this appendix, we give proofs of the finite convergence for the method presented in Section \ref{iterative_sub_problem} for two types of secondary normalizations. First, when the secondary normalization guarantees that a facet of $ Q_2 $ will be generated by the linear program. Second, when the secondary normalization is $ \| \pi \| \leq 1 $ where $\|.\|$ is any norm.

\subsection{Secondary normalization generating facets}
\label{facet-normalization}

In this section, we are interested in the termination of the proposed method to solve the Fenchel sub-problem when the secondary normalization guarantees that a facet of $ Q_2 $ will be generated by the separation linear program.

\begin{theorem}
If the secondary normalization guarantees the generation of facets then the method generates a cut associated with a directional normalization in finite time.
\end{theorem}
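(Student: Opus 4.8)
The plan is to track the position of the working point $x'$ along the segment $(\widecheck{x}, \widehat{x})$ and to show that it advances strictly monotonically toward $\widehat{x}$, consuming a \emph{distinct} facet of $Q_2$ at every iteration; since a bounded polyhedron has only finitely many facets, only finitely many iterations can occur. First I would parametrize the segment by $\gamma(t) = (1-t)\widecheck{x} + t\widehat{x}$ for $t \in [0,1]$, so that $\gamma(0) = \widecheck{x} \notin Q_2$ and $\gamma(1) = \widehat{x} \in Q_2$. Because $Q_2$ is convex, there is a unique threshold $t^* \in (0,1]$ with $\gamma(t) \in Q_2$ if and only if $t \in [t^*, 1]$; the point $x^* = \gamma(t^*)$ is exactly the intersection of the segment with the frontier of $Q_2$, and the cut associated with the directional normalization is any facet of $Q_2$ whose supporting hyperplane passes through $x^*$.

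Next I would analyze a single iteration. Writing $x'_i = \gamma(t_i)$ with $t_i < t^*$ (guaranteed by the \textbf{while} condition $x'_i \notin Q_2$), the hypothesis that the secondary normalization generates facets yields a facet $C_i : \pi_i x \leq \pi_{i,0}$ of $Q_2$ separating $x'_i$, i.e. $\pi_i x'_i > \pi_{i,0}$. I would then study the affine function $g_i(t) = \pi_i \gamma(t) - \pi_{i,0}$. Since $g_i(t_i) > 0$ (violation), $g_i(1) \leq 0$ (validity of $C_i$ at $\widehat{x} \in Q_2$), and $g_i$ is affine and non-constant, it admits a unique root $t_{i+1}$ with $t_i < t_{i+1} \leq t^*$; the upper bound follows from $g_i(t^*) \leq 0$, as $\gamma(t^*) \in Q_2$. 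The update $x'_{i+1} = \gamma(t_{i+1})$ therefore strictly increases the parameter, and $g_i(t) < 0$ for every $t > t_{i+1}$.

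The central step is to show that no facet is generated twice. For indices $k < i$, monotonicity gives $t_{k+1} \leq t_i$, so the sign behaviour of $g_k$ yields $g_k(t_i) \leq 0$: the current point $x'_i$ lies on the valid side of $C_k$. But $C_i$ strictly separates $x'_i$ (that is, $g_i(t_i) > 0$), so $C_i$ and $C_k$ impose opposite signs at $x'_i$ and cannot coincide. Hence the facets $C_1, C_2, \dots$ are pairwise distinct, and since $Q_2$ is a bounded polyhedron with finitely many facets, the loop executes only finitely many times.

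Finally I would confirm that the output is the desired cut. The loop halts exactly when $x'_{i+1} \in Q_2$, i.e. when $t_{i+1} = t^*$, so the last point reached is $x^*$; the returned cut is the facet whose hyperplane produced this crossing, and by construction that hyperplane passes through $x^* = \gamma(t^*)$. Being a facet of $Q_2$ tight at the point where the segment meets the frontier, it is precisely a cut associated with the directional normalization toward $\widehat{x}$. The main obstacle is the no-repetition argument, which hinges on correctly combining the monotone advance of $t_i$ with the affine sign analysis of each $g_k$ along the segment; the degenerate case $t^* = 1$, in which the segment meets $Q_2$ only at $\widehat{x}$, requires a brief separate check but is covered by the very same reasoning.
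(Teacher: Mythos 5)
Your proof is correct and follows essentially the same route as the paper's: the point $x'$ advances monotonically along the segment $(\widecheck{x},\widehat{x})$, so every previously generated facet remains satisfied by all later iterates and can never be produced twice, and finiteness of the facet set forces termination at the crossing point. Your parametrization $\gamma(t)$ and the affine sign analysis of the $g_i$ merely make rigorous what the paper states informally (including the final step, where the paper likewise observes that once the facet through the frontier intersection is generated, one more check of $x' \in Q_2$ ends the loop).
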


\begin{proof}
We will show that in the worst case the method ends after the secondary separation has generated all the facets of $ Q_2 $. For this, we show that each facet of $ Q_2 $ is generated at most once. As the algorithm progresses, the point $ x '$ separated during secondary separations advances along the segment $ (\widecheck{x}, \widehat{x}) $ in the direction of $ \widehat{x} $. Once a facet of $ Q_2 $ is generated, the point $ x '$ is projected onto that facet along the segment $ (\widecheck{x}, \widehat{x}) $. The future points $ x '$ will therefore all satisfy the inequality associated with this facet which can thus no longer be generated by the secondary separation problem. Since a polyhedron has a finite number of facets, a facet intersecting the segment $ (\widecheck{x}, \widehat{x}) $ is generated in a finite number of steps. Once this happens, the method ends after a single call to the alternate separation problem. Indeed, the procedure places the point $ x '$ on the point of intersection between this facet and the segment $ (\widecheck{x}, \widehat{x}) $. On the next iteration, the secondary separation problem indicates that the point $ x '$ belongs to $ Q_2 $ and the method stops. \qed
\end{proof}

\subsection{Secondary normalization using any norm}
\label{norm-normalization}

We now present a proof of convergence when the secondary normalization is $ \| \pi \| \leq 1 $ where $\|.\|$ is any norm. This proof uses the Lemmas \ref{lemme_coupe_sous_gradient} and \ref{lemme_angle} presented in \citet{boyd1995convergence}. Previously, we recall the following properties and notations. In the case of a normalization $ \| \pi \| \leq 1 $, the dual problem of the separation problem is: find the point of $ Q_2 $ minimizing the distance $ \| x - \widecheck{x} \|^* $ where $ \|. \|^* $ is the dual norm of $ \|. \| $: $ \| \lambda \|^* = \max_{\| x \| \leq 1} \lambda x $. The solution point of the dual problem is denoted $ x^* $ and always satisfies at equality the cut generated by the separation problem. We now present the lemmas used in the proof.

\begin{lemma}[\citet{boyd1995convergence}]
\label{lemme_coupe_sous_gradient}
Let $\lambda^* x \leq \lambda^* x^*$ be the cut generated during the separation of a point $\widehat{x}$ from a polyhedron $Q_2$ with the normalization $\| \pi \| \leq 1$ where $x^*$ is the optimal solution of the dual of the separation problem. Then $-\lambda^*$ is a sub-gradient of $x \mapsto \| x - \widecheck{x} \|^*$ in $x^*$.
\end{lemma}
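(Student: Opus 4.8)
The plan is to verify directly that $-\lambda^*$ satisfies the subgradient inequality for the function $g : x \mapsto \|x - \widecheck{x}\|^*$ at the point $x^*$, namely that $g(y) \geq g(x^*) + (-\lambda^*)(y - x^*)$ holds for every $y$. (I read $\widecheck{x}$ as the point being separated, consistent with its appearance in $g$.) The whole argument rests on two ingredients: the variational characterization of the dual norm, $\|z\|^* = \max_{\|\pi\| \leq 1} \pi z$, and the optimality of the primal-dual pair $(\lambda^*, x^*)$ of the separation problem under the normalization $\|\pi\| \leq 1$.

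First I would record the value of $g$ at $x^*$. Since the cut returned by the separation problem is $\lambda^* x \leq \lambda^* x^*$ with $\|\lambda^*\| \leq 1$, and $x^*$ lies on this cut at equality, the optimal value of $(S)$ equals $\lambda^* \widecheck{x} - \lambda^* x^* = \lambda^*(\widecheck{x} - x^*)$. By the strong duality already described in this section — the dual of $(S)$ being the search for the point of $Q_2$ closest to $\widecheck{x}$ in the dual norm — this optimal value coincides with $\|x^* - \widecheck{x}\|^* = g(x^*)$. Hence $g(x^*) = \lambda^*(\widecheck{x} - x^*) = -\lambda^*(x^* - \widecheck{x})$.

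Next I would establish the inequality for an arbitrary $y$. Using the variational definition of the dual norm together with $\|-\lambda^*\| = \|\lambda^*\| \leq 1$, one gets $g(y) = \|y - \widecheck{x}\|^* = \max_{\|\pi\| \leq 1} \pi(y - \widecheck{x}) \geq -\lambda^*(y - \widecheck{x})$. Rewriting the right-hand side as $-\lambda^*(x^* - \widecheck{x}) + (-\lambda^*)(y - x^*)$ and substituting the expression for $g(x^*)$ obtained above yields $g(y) \geq g(x^*) + (-\lambda^*)(y - x^*)$, which is precisely the subgradient inequality. This shows $-\lambda^*$ is a subgradient of $g$ at $x^*$.

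The only genuinely delicate point is the appeal to strong duality that identifies the optimal value of $(S)$ with the dual-norm distance $g(x^*)$ and certifies that the cut is tight at $x^*$; this can be obtained either by invoking the minimax theorem on $\max_{\|\pi\| \leq 1} \min_{x \in Q_2} \pi(\widecheck{x} - x)$, both feasible sets being compact and convex, or simply by citing the primal-dual correspondence set up earlier in the section. Once that is in hand, the subgradient inequality follows immediately from the definition of the dual norm, so no further technical machinery is required.
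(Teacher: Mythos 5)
Your proof is correct, but there is nothing in the paper to compare it against: the paper states this lemma without proof, importing it verbatim from \citet{boyd1995convergence} (the surrounding text only records the facts that the dual of the normalized separation problem is the dual-norm projection problem and that $x^*$ satisfies the generated cut at equality). Your argument is a clean, self-contained replacement. It amounts to verifying the classical characterization of the subdifferential of a norm: $u$ is a subgradient of $z \mapsto \|z\|^*$ at $\bar{z}$ if and only if $\|u\| \leq 1$ and $u\bar{z} = \|\bar{z}\|^*$; your two steps check exactly these conditions for $u = -\lambda^*$ and $\bar{z} = x^* - \widecheck{x}$ (composed with the translation $x \mapsto x - \widecheck{x}$, which leaves subgradients unchanged). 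The one load-bearing ingredient, as you flag, is the strong-duality identity $\max_{\|\pi\| \leq 1} \min_{x \in Q_2} \pi(\widecheck{x} - x) = \min_{x \in Q_2} \|x - \widecheck{x}\|^*$, together with the tightness $\pi_0^* = \lambda^* x^*$; both are legitimate here, either by Sion's minimax theorem (the norm ball and $Q_2$ are compact and convex, the objective bilinear — boundedness of $Q_2$ is assumed in the paper) or by the primal-dual correspondence the paper has already set up, and note that separate primal and dual optimality does imply a saddle point once strong duality holds, so no extra hypothesis is hidden there. You were also right to read the statement's $\widehat{x}$ as a typo for $\widecheck{x}$: in the appendix where the lemma is applied, the separated point is $x^{(i)}$, playing the role of $\widecheck{x}$, while $\widehat{x}$ is the interior anchor of the directional normalization, so the function $x \mapsto \|x - \widecheck{x}\|^*$ fixes the intended meaning. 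No gaps.
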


In the second lemma $\angle(\lambda, x)$ will denote the angle between the vectors $\lambda$ and $x$ (the one lower than $\pi$ radiant).

\begin{lemma}[\citet{boyd1995convergence}]
\label{lemme_angle}
For each norm, there exists an angle $ \theta_{min}> 0 $ such that at any point $ x $ and for any sub-gradient $ \lambda $ of this norm in $x$:
$$\angle(\lambda, x) \geq \frac{\pi}{2} - \theta_{min}$$
\end{lemma}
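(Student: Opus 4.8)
The plan is to pass from the angle to a Euclidean inner product and then to control that inner product uniformly using the variational description of subgradients of a norm. Write $N$ for the norm at play (the dual norm $\|\cdot\|^*$ of the secondary normalization) and $N^*$ for its dual. First I would record two elementary ingredients: the identity $\cos\angle(\lambda, x) = \lambda^\top x/(\|\lambda\|_2\,\|x\|_2)$, and the supporting-hyperplane characterization of the subdifferential of a norm, namely that a vector $\mu$ is a subgradient of $N$ at a nonzero point $x$ if and only if $N^*(\mu) = 1$ and $\mu^\top x = N(x)$. By Lemma \ref{lemme_coupe_sous_gradient}, the vector $\lambda$ produced by the separation is the negative of such a subgradient, so that $\lambda^\top x = -N(x) \le 0$ while $N^*(\lambda) = 1$. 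With these, the claim $\angle(\lambda, x) \ge \tfrac{\pi}{2} - \theta_{min}$ reduces to the scalar inequality $\cos\angle(\lambda, x) \le \sin\theta_{min}$, i.e. to an upper bound on this cosine.

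The key step is then to produce, for the given norm, a single $\theta_{min} > 0$ valid at every point and for every subgradient. Since $|\lambda^\top x| = N(x)$ is comparable to $\|x\|_2$, finite-dimensional norm equivalence, $c_1\|x\|_2 \le N(x) \le c_2\|x\|_2$ together with the bound $\|\lambda\|_2 \le C$ coming from $N^*(\lambda) = 1$, gives $\cos\angle(\lambda, x) = \lambda^\top x/(\|\lambda\|_2\,\|x\|_2) \le -c_1/C < 0$ uniformly in $x$ and in the subgradient. In particular $\angle(\lambda, x) \ge \arccos(-c_1/C) > \tfrac{\pi}{2}$, so the asserted inequality $\angle(\lambda, x) \ge \tfrac{\pi}{2} - \theta_{min}$ holds for every $\theta_{min} \in (0, \tfrac{\pi}{2}]$, and a strictly positive such $\theta_{min}$ plainly exists because the bound $-c_1/C$ is strictly negative and independent of $x$. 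By positive homogeneity of $N$ and the scale invariance of the angle one may equivalently phrase the uniformity as an extremum attained on the compact Euclidean unit sphere, which is another route to the same constant.

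The step I expect to be the main obstacle is pinning down the orientation that fixes the sign of $\lambda^\top x$, since this is exactly what decides on which side of $\tfrac{\pi}{2}$ the angle lies and hence the direction of the inequality; here it is provided by the fact, recorded in Lemma \ref{lemme_coupe_sous_gradient}, that the separating normal $\lambda$ is the negative of a subgradient of $\|\cdot - \widecheck{x}\|^*$ at $x^*$, so that $\lambda$ and the radial direction $x = x^* - \widecheck{x}$ have nonpositive inner product. Two residual technicalities should be handled but are mild: at non-smooth points the subdifferential $\partial N(x)$ is an entire face of the dual unit ball, yet the identity $\mu^\top x = N(x)$ holds for every element of that face, so the uniform bound is untouched; and the degenerate case $x = 0$ is excluded, since the point being separated lies strictly outside $Q_2$, whence $x$ is a genuine nonzero direction and the angle is everywhere well defined.
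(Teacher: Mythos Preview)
The paper does not prove this lemma; it is quoted from \citet{boyd1995convergence} and used as a black box in the proof of the theorem in Appendix~\ref{norm-normalization}. So there is no proof in the paper to compare against.

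More importantly, the lemma as printed contains a sign error: the inequality should read $\angle(\lambda,x)\le \tfrac{\pi}{2}-\theta_{min}$, not $\ge$. This is clear from how the lemma is invoked two paragraphs later, where the authors write ``according to Lemma~\ref{lemme_angle}, we have $\angle(-\lambda^*, x^* - x^{(i)}) \leq \frac{\pi}{2} - \theta_{min}$'', and then use $\tfrac{\pi}{2}-\angle(\lambda^*,x^{(i)}-x^*)\ge\theta_{min}$. The content of the lemma is that a subgradient of a norm at $x$ makes an angle with $x$ that is bounded \emph{away from} $\tfrac{\pi}{2}$ (for the Euclidean norm the subgradient is $x/\|x\|_2$ and the angle is zero).

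Your argument has drifted because you tried to match the printed (wrong) direction of the inequality. You redefine $\lambda$ to be the \emph{negative} of a subgradient by invoking Lemma~\ref{lemme_coupe_sous_gradient}, but Lemma~\ref{lemme_angle} is a self-contained statement about subgradients of a norm, with no reference to the separation problem; the $\lambda$ in its hypothesis \emph{is} a subgradient, so $\lambda^\top x = N(x)>0$, not $-N(x)$. With your sign flip you obtain $\angle(\lambda,x)>\tfrac{\pi}{2}$, which makes the printed inequality $\ge\tfrac{\pi}{2}-\theta_{min}$ hold for every $\theta_{min}>0$ and carries no information---in particular it cannot deliver the strictly positive $\theta_{min}$ that the theorem's trigonometric step needs. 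The correct argument is the one you almost wrote but with the sign kept straight: from $\lambda^\top x = N(x)$ and $N^*(\lambda)=1$, finite-dimensional norm equivalence $c_1\|y\|_2\le N(y)\le c_2\|y\|_2$ gives $c_1\le\|\lambda\|_2\le c_2$ and hence
\[
\cos\angle(\lambda,x)=\frac{N(x)}{\|\lambda\|_2\,\|x\|_2}\ \ge\ \frac{c_1}{c_2}\ >\ 0
\]
uniformly in $x$ and in the choice of subgradient, so $\angle(\lambda,x)\le\arccos(c_1/c_2)=\tfrac{\pi}{2}-\theta_{min}$ with $\theta_{min}=\arcsin(c_1/c_2)>0$. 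That is the bound actually used downstream.
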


We now present the main theorem of this section.

\begin{theorem}
If the secondary normalization is $ \| \pi \| \leq 1 $ for any norm then the method generates a cut associated with a directional normalization in finite time.
\end{theorem}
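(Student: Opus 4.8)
The plan is to reduce the statement to a finite‑termination property of the \textbf{while} loop of Algorithm~\ref{algo:Fenchel_sp}, and then to establish that property by combining the metric content of Lemmas~\ref{lemme_coupe_sous_gradient}--\ref{lemme_angle} with the polyhedral structure of $Q_2$. Parametrize the segment by $x(t)=\widecheck{x}+t\,d$ with $d:=\widehat{x}-\widecheck{x}$ and $t\in[0,1]$, and let $t^{*}\in(0,1]$ be the value for which $P:=x(t^{*})$ is the intersection of $(\widecheck{x},\widehat{x})$ with the frontier of $Q_2$; since $\widehat{x}\in Q_2$ and $Q_2$ is convex and closed, $x(t)\in Q_2$ precisely for $t\ge t^{*}$. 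Denote by $x'_k=x(t_k)$ the point separated at iteration $k$ (so $t_0=0$), by $x^{*}_k$ the point of $Q_2$ closest to $x'_k$ in the dual norm $\|\cdot\|^{*}$, by $\delta_k=\|x'_k-x^{*}_k\|^{*}=\mathrm{dist}(x'_k,Q_2)$ the violation, and by $\lambda_k x\le\lambda_k x^{*}_k$ (with $\|\lambda_k\|\le 1$) the generated cut. Note that once the loop stops, the last cut satisfies $\lambda_k x(t_{k+1})=\lambda_k x^{*}_k$ with $x(t_{k+1})\in Q_2$; I will show $t_{k+1}=t^{*}$, so this cut passes through $P$ and is therefore exactly the cut associated with the directional normalization toward $\widehat{x}$. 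It thus suffices to prove that the loop performs finitely many iterations.

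First I would record the monotonicity facts. By Lemma~\ref{lemme_coupe_sous_gradient}, $-\lambda_k$ is a subgradient of $x\mapsto\|x-x'_k\|^{*}$ at $x^{*}_k$, so $\lambda_k(x'_k-x^{*}_k)=\delta_k$, i.e.\ the violation of the cut by $x'_k$ equals $\delta_k$. Since the cut is valid for $Q_2\ni P$ we have $\lambda_k P\le\lambda_k x^{*}_k$; moreover $\lambda_k d\neq 0$ (otherwise $\lambda_k x(\cdot)$ would be constant and strictly positive on the whole segment, contradicting $\lambda_k P\le\lambda_k x^{*}_k<\lambda_k x'_k$). As $t_{k+1}$ solves $\lambda_k x(t)=\lambda_k x^{*}_k$ and $\lambda_k x(\cdot)$ is strictly decreasing, I get $t_k<t_{k+1}\le t^{*}$. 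Subtracting $\lambda_k x(t_{k+1})=\lambda_k x^{*}_k$ from $\lambda_k x'_k=\lambda_k x^{*}_k+\delta_k$ yields the step length $s_k:=t_{k+1}-t_k=\delta_k/|\lambda_k d|\ge \delta_k/C$, where $C:=\|d\|_2\,\sup_k\|\lambda_k\|_2<\infty$ because the $\lambda_k$ lie in a compact normalization set. Hence $(t_k)$ is strictly increasing and bounded by $t^{*}$ and converges to some $t_\infty\le t^{*}$.

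The heart of the argument is finiteness, which I would prove by contradiction, assuming infinitely many iterations. If $t_\infty<t^{*}$ then $x(t_\infty)\notin Q_2$, so $\delta_k\to\mathrm{dist}(x(t_\infty),Q_2)>0$ by continuity; but $s_k\to 0$ since $(t_k)$ converges, contradicting $s_k\ge\delta_k/C$. If $t_\infty=t^{*}$ then $x'_k\to P$ and $\delta_k\to 0$, whence $x^{*}_k\to P$. Here I invoke that $Q_2$ is a polytope: each $x^{*}_k$ lies in the relative interior of a unique face, and as there are finitely many faces some face $G$ recurs for infinitely many indices. Then $x^{*}_k\to P$ with $x^{*}_k\in\mathrm{relint}(G)$ forces $P\in\overline{G}$, and since a valid inequality tight at a relative‑interior point of $G$ is tight on all of $\overline{G}$, the corresponding cut satisfies $\lambda_k P=\lambda_k x^{*}_k$. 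Consequently $t_{k+1}$ solves $\lambda_k x(t)=\lambda_k P=\lambda_k x(t^{*})$, so $t_{k+1}=t^{*}$ and $x'_{k+1}=P\in Q_2$, meaning the loop halts at that iteration---contradicting non‑termination. This proves that the loop stops after finitely many iterations, and by the first paragraph the cut returned is the directional‑normalization cut.

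The step I expect to be delicate is the ``tangential accumulation'' scenario $t_\infty=t^{*}$. A purely metric treatment via Lemmas~\ref{lemme_coupe_sous_gradient}--\ref{lemme_angle} controls the angle between $\lambda_k$ and $x'_k-x^{*}_k$ and so yields a contraction of $\delta_k$, but the contraction factor degrades as the segment becomes tangent to the frontier --- the angle between $\lambda_k$ and the segment direction $d$ is \emph{not} bounded by the norm alone --- and therefore gives only asymptotic convergence $\delta_k\to 0$, never finiteness. The resolution I would rely on is to trade this metric estimate for the combinatorial fact that $Q_2$ has finitely many faces, which forces a supporting hyperplane through $P$ to be reproduced exactly and to cut the segment precisely at $t^{*}$. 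I would therefore use Lemma~\ref{lemme_angle} only for the optional quantitative statement (an explicit geometric decrease of $\delta_k$ away from tangency), while the finite‑termination conclusion rests on the face‑recurrence argument above. \qed
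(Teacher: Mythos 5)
Your proof is correct, but it follows a genuinely different route from the paper's. The paper argues quantitatively: it splits each iteration according to whether the generated face meets the segment $(\widecheck{x}, \widehat{x})$ or not; in the first case the method stops after one further call, and in the second case it combines Lemma~\ref{lemme_coupe_sous_gradient} with the angle bound of Lemma~\ref{lemme_angle} and the strictly positive distance $d_{min}$ between the segment and the union of the faces that miss it, to show that $x'$ advances by at least $d_{min}\sin(\theta_{min})$ per iteration, which yields the explicit bound $\|\widehat{x}-\widecheck{x}\|_2 / (d_{min}\sin(\theta_{min}))$ on the number of iterations. You instead argue qualitatively by contradiction: the parameters $t_k$ are strictly increasing and bounded by $t^*$, hence convergent; a limit strictly before $t^*$ is excluded because the step length $\delta_k/|\lambda_k d|$ would vanish while the violation $\delta_k$ stays bounded away from zero; a limit at $t^*$ is excluded by pigeonhole on the finitely many faces --- a recurring face $G$ with $x^*_k \in \mathrm{relint}(G)$ and $x^*_k \to P$ forces $P \in G$, hence tightness of that iteration's cut on all of $G$ and in particular at $P$, so $t_{k+1} = t^*$ and the loop halts immediately, contradicting non-termination. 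Both arguments ultimately rest on the finiteness of the set of faces of $Q_2$ (in the paper this is exactly what makes $d_{min}>0$), and your use of Lemma~\ref{lemme_coupe_sous_gradient} to identify the cut violation with the dual-norm distance mirrors the paper's use of it; but you dispense with Lemma~\ref{lemme_angle} entirely, at the price of obtaining termination without any iteration bound, whereas the paper's metric route produces an explicit, norm-dependent bound. One remark in your closing paragraph is slightly unfair to the metric approach: you claim it can ``never'' give finiteness because the angle between $\lambda_k$ and the segment direction degrades near tangency, but the paper circumvents precisely this by its dichotomy --- the degradation can only occur for faces meeting the segment, a case that terminates in one further call, while the uniform distance $d_{min}$ applies to all remaining faces. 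So the tangential-accumulation scenario you isolate as the delicate step is real, but it admits both your combinatorial resolution and the paper's quantitative one.
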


\begin{proof}
\textit{Scheme of the proof:} First we will show that once a face intersecting the segment $ (\widecheck{x}, \widehat{x}) $ has been generated the method ends after a single call to the secondary separation problem. Secondly, we will show that if another face of $ Q_2 $ is generated, the point $ x '$ advances more than $\epsilon$ along the segment $ (\widecheck{x}, \widehat{x}) $ toward $ \widehat{x} $ where $\epsilon$ is a strictly non-negative distance independent of the iteration. Thus, this second case cannot happen more than $ \frac{\| \widehat{x} - \widecheck{x} \|_2}{\epsilon} $ times so the procedure ends in a finite number of steps.

\vspace{0.2cm}

\textbf{1)} Suppose that at one iteration, a face intersecting the segment $ (\widecheck{x}, \widehat{x}) $ is generated. After generating the face, the procedure places the point $ x '$ on the intersection point. At the next iteration, the secondary separation problem indicates that the point $ x '$ belongs to $ Q_2 $ and the method stops.

\vspace{0.2cm}

\textbf{2)} For the rest of this proof, we will denote by $ x^{(i)} $ the point $ x'$ separated during iteration $ i $ of the algorithm. We will show that if a face $ F $ of $ Q_2 $ generated does not intersect the segment $ (\widecheck{x}, \widehat{x}) $, then the point $ x '$ advances along the segment $ (\widecheck{x}, \widehat{x}) $ a strictly non-negative distance $d_{min} \sin(\theta_{min})$ independent of the face $ F $, \textit{i.e.} $ \| x^{(i + 1)} - x^{(i)} \|_2 \geq d_{min} \sin(\theta_{min}) $. To that end, we will use trigonometry on the triangle formed by the points associated with $x^{(i)}$, $x^*$, and $x^{(i+1)}$ where $x^*$ is the dual optimal solution of the secondary separation problem. This triangle will be denoted $\triangle x^{(i)} x^* x^{(i+1)}$. We will use $\theta_{min}$ as lower bound for the angle $\angle(x^{(i+1)} - x^{*}, x^{(i)} - x^{*})$ and $d_{min}$ as lower bound for the distance $\|x^{(i)} - x^{*} \|_2$.

\vspace{0.2cm}

\textbf{2.1)} Suppose that at iteration $i$, the secondary separation problem of $x^{(i)}$ returns a primal-dual solution pair $(\lambda^*, x^*)$ which thus correspond to the cut $\lambda^* x \leq \lambda^* x^*$. Recall that $x^*$ is a point of $Q_2$ satisfying the generated cut to equality. It is thus on the generated face $F$. From Lemma \ref{lemme_coupe_sous_gradient}, since the secondary normalization is $\| \pi \| \leq 1$, the vector $-\lambda^*$ is a sub-gradient of $\| x - x^{(i)} \|^*$ in $x^*$. Thus, according to Lemma \ref{lemme_angle}, we have $\angle(-\lambda^*, x^* - x^{(i)}) \leq \frac{\pi}{2} - \theta_{min}$ for a $\theta_{min} > 0$ depending on the used norm but neither on the face $F$ nor on the iteration. This is equivalent to $\frac{\pi}{2} - \angle(\lambda^*, x^{(i)} - x^*) \geq \theta_{min}$. After generating the cut $\lambda^* x \leq \lambda^* x^*$, the procedure projects the point $x^{(i)}$ on the hyperplane $\lambda^* x = \lambda^* x^*$ along the segment $(\widecheck{x}, \widehat{x})$. The result of this projection is the point $x^{(i+1)}$. Since both $x^{(i+1)}$ and $x^*$ are point of the hyperplane $\lambda^* x = \lambda^* x^*$, the vector $x^{(i+1)} - x^*$ is on the hyperplane $\lambda^* x = 0$. Let us consider the smallest angle between $x^{(i)} - x^*$ and a point of the hyperplane $\lambda^* x = 0$. Firstly, it is smaller than the angle $\angle(x^{(i+1)} - x^*, x^{(i)} - x^*)$. Secondly, this smallest angle can be expressed in terms of the normal $\lambda^*$ as $\frac{\pi}{2} - \angle(\lambda^*, x^{(i)} - x^*)$ which is shown above to be greater than $\theta_{min}$. Thus, we have shown that \textbf{the angle $\bm{\angle(x^{(i+1)} - x^{*}, x^{(i)} - x^{*})}$ is greater than $\bm{\theta_{min}}$}.

\vspace{0.2cm}

\textbf{2.2)} Let $d_{min}$ be the distance in norm $\|.\|_2$ between the segment $(\widecheck{x}, \widehat{x})$ and the union of the faces of $Q_2$ that do not intersect the segment $(\widecheck{x}, \widehat{x})$. Since the generated face $ F $ does not intersect the segment $(\widecheck{x}, \widehat{x})$, the distance between the segment $(\widecheck{x}, \widehat{x})$ and the face $F$ is greater than $d_{min}$. However, $x^{(i)}$ belongs to the segment $(\widecheck{x}, \widehat{x})$ and $x^*$ to the face $F$ so \textbf{the distance $\bm{\|x^{(i)} - x^{*} \|_2}$ is greater than $\bm{d_{min}}$}.

\vspace{0.2cm}

\textbf{2.3)} We will now show with simple trigonometry on the triangle $\triangle x^{(i)} x^* x^{(i+1)}$ that $\| x^{(i+1)} - x^{(i)} \|_2 \geq d_{min} \sin(\theta_{min})$. First, the distance $\| x^{(i+1)} - x^{(i)} \|_2$ is larger than the length of the altitude of the triangle $\triangle x^{(i)} x^* x^{(i+1)}$ associated with $x^{(i)}$. However, the length of this altitude is $\| x^{(i)} - x^* \|_2 \sin(\angle(x^{(i)} - x^*, x^{(i+1)} - x^*))$ which according to paragraph 2.1) and 2.2) is greater than $d_{min} \sin(\theta_{min})$. Thus finally, we have: $\bm{\| x^{(i+1)} - x^{(i)} \|_2 \geq d_{min} \sin(\theta_{min})}$.

\vspace{0.2cm}

The distance $d_{min} \sin(\theta_{min})$ is strictly non-negative and independent of the iteration of the algorithm which completes the proof.
\qed
\end{proof}

\section{Solution of the knapsack oracle}
\label{annex_knapsack}

All the decomposition methods presented in this work assume that there exists an efficient algorithm capable of optimizing a linear function on the polyhedron $ Q_2 $ which in our version of the unsplittable flow problem can be written as:
$$\{(f_a^k \in \{0, 1\})_{a \in A, k \in k} o_a \in \mathbb{R}^+ | \sum_{k \in K} f_a^k D_k \leq c_a + o_a\}.$$
Optimizing a linear function whose coefficients are $(\pi^k)_{k \in K}$ and $-\pi_o$ on this polyhedron can thus be done by following \milp:

\begin{subequations}
\begin{alignat*}{3}
(O_a) \quad &\max_{f_a^k, o_a} && \sum_{k \in K} \pi^k f_a^k - \pi_o o_a  \\
& \text{subject to} \quad && \sum_{k \in K} f_a^k D_k \leq c_a + o_a \\
&&& f_a^k \in \{0, 1\}, ~ o_a \in \mathbb{R}^+
\end{alignat*}
\end{subequations}

This problem can be addressed by solving two classic 0-1 knapsack problems using a case disjunction. This method was presented by \citet{buther2012reducing} but we recall it here. Consider the following disjunction: either the flow of the commodities respects the capacity of the arc $ a $, or the flow of the commodities exceeds the capacity of the arc $ a $. Finding the best solution in the first case amounts to solving the following problem:
\begin{subequations}
\begin{alignat*}{3}
&\max_{f_a^k} && \sum_{k \in K} \pi^k f_a^k \\
&\text{subject to} \quad && \sum_{k \in K} f_a^k D_k \leq c_a \\
&&& f_a^k \in \{0, 1\}
\end{alignat*}
\end{subequations}

Indeed, we suppose that the coefficient $ \pi_o $ is non-negative because otherwise the problem $ O_a $ would be unbounded. Since the flow of commodities respects the capacity of the arc, the variable $ o_a $ always takes the value zero and can be removed from the problem. Note that in this first case of the disjunction, the problem to be solved is a classic knapsack problem. In the second case of disjunction, finding the best solution amounts to solving the following problem:

\begin{subequations}
\begin{alignat*}{3}
&\max_{f_a^k, o_a} && \sum_{k \in K} \pi^k f_a^k - \pi_o o_a  \\
&\text{subject to} \quad && \sum_{k \in K} f_e^k D_k \leq c_a + o_a \\
&&& \sum_{k \in K} f_a^k d^k \geq c_a\\
&&& f_a^k \in \{0, 1\}, ~ o_a \in \mathbb{R}^+
\end{alignat*}
\end{subequations}

Since we assume that $ \pi_o $ is non-negative and that the flow of commodities does not respect the capacity of the arcs, the variable $ o_e $ is always equal to $ \sum_{k \in K} f_a^k D_k - c_a $. By performing the replacement in the objective function, by replacing the variables $ f_a^k $ by their complement $ \bar{f}_a^k = 1 - f_a^k $ and by multiplying the constraint $ \sum_{k \in K } f_a^k D_k \geq c_a $ by -1 we get the following reformulation:

\begin{subequations}
\begin{alignat*}{2}
&\max_{\bar{f}_a^k} && \sum_{k \in K} (\pi_o D_k - \pi^k) \bar{f}_a^k + C  \\
&\text{subject to} \quad && \sum_{k \in K} \bar{f}_a^k D_k \leq \sum_{k \in K} D_k - c_a \label{eq:inverse_knapsack_constraint}\\
&&& \bar{f}_a^k \in \{0, 1\}
\end{alignat*}
\end{subequations}

where the constant $ C $ is equal to $ \sum_{k \in K} (\pi^k - \pi_o d^k) - \pi_o c_a $. This reformulation shows that the problem to be solved in the second case of the disjunction is also a classic knapsack problem.

\textit{Note:} in the experimental study of Section \ref{sec:results}, in order to solve these two knapsack problems, we use the MINKNAP algorithm proposed by \citet{pisinger1997minimal}. Since MINKNAP only accepts integer-valued weights and profits, we multiply the dual vector $(\pi, \pi_0, \pi_o)$ by $10^{7}$ and truncate before invoking MINKNAP.

\section{Pre/post-processing for Fenchel subproblems}
\label{pre_post_traitement}

Solving directly a Fenchel subproblem is sometimes too computationally expensive to be integrated into a decomposition method. However, the resolution time of this subproblem can be greatly reduced with pre/post-processing steps. Indeed, \citet{boccia2008cut} showed that it is possible to solve the Fenchel separation problem by focusing on a sub-polyhedron of $Q_2$ of far lower dimension. However, the cut generated may not be valid for $Q_2$ and one must use a \textit{lifting} procedure to create a cut valid for $Q_2$. These concepts are explained in the following.

\textbf{Dimensionality reduction:} This technique typically applies when the variables $ x $ of the initial problem ($ P $) are binary. Instead of separating a point $ \widecheck{x} $ from $ Q_2 $, this point is separated from the sub-polyhedron $ Q_2^f(\widecheck{x}) $ induced by the variables taking a value different from their bounds in $ \widecheck{x} $ (in the binary case, induced by variables taking a fractional value in $ \widecheck{x} $). More precisely, $ Q_2^f (\widecheck{x}) = \{x \in Q_2 | x_i = \widecheck{x}_i \text { if } \widecheck{x}_i \in \{0, 1 \} \} $. Indeed, $ \widecheck{x} $ belongs to $ Q_2 $ if and only if it belongs to $ Q_2^f (\widecheck{x}) $ which implies that $ \widecheck{x} $ is separable from $ Q_2 $ if and only if it is separable from $ Q_2^f (\widecheck{x}) $. Replacing $ Q_2 $ with $ Q_2^f (\widecheck{x}) $ in the separation problem has two advantages. First, the problem only considers variables taking a fractional value in $ \widecheck{x} $ which reduces the number of dimensions of the problem and therefore speeds up its resolution. Forcing certain variables to take a binary value in the optimization oracle on $ Q_2 $ is also often easy to do. Second, if the separation problem ($ S $) is solved directly through constraint generation, most of the computational time is used to generate constraints associated with vertices of $ Q_2 $ not belonging to $ Q_2^f (\widecheck{x}) $ and which are therefore mostly unnecessary. Reducing the search space to $ Q_2^f (\widecheck{x}) $ allows the computation to concentrate on the part of the space containing the most important vertices.

\textbf{Lifting:} Upon applying the dimensionality reduction described before, the cut generated will be valid for $ Q_2^f (\widecheck{x}) $ but not necessarily for $ Q_2 $. A procedure able to create a valid cut for $ Q_2 $ from one valid for $ Q_2^f (\widecheck{x}) $ is the sequential lifting procedure \citep{wolsey1999integer} that we will now describe. In the following, we will name fixed variables, the variables eliminated from the separation problem when replacing the polyhedron $ Q_2 $ by the polyhedron $ Q_2^f (\widecheck{x}) $. For the fixed variables, the coefficients of the generated cut are all zero. At each step of the sequential lifting procedure, a new value for the coefficient of one of the fixed variables is computed so that the cut becomes valid for the polyhedron where this variable is no longer fixed. These new coefficients are optimal in the sense that if the initial cut was a facet of the polyhedron $ Q_2^f (\widecheck{x}) $ then the cut resulting from the sequential lifting is a facet of $ Q_2 $. We now present an iteration of the sequential lifting procedure. Suppose therefore that $ Q_2^f (\widecheck{x}) = \{x \in Q_2 | x_1 = 1 \} $; the procedure will then contain only one iteration because only the variable $ x_1 $ is fixed. The cut generated for $ Q_2^f (\widecheck{x}) $ is $ \pi x \leq \pi_0 $ where the coefficient $ \pi_1 $ associated with the variable $ x_1 $ is zero. The lifting procedure consists in creating a new cut $ \gamma x \leq \gamma_0 $ such that the two cuts are identical when $ x_1 = 1 $ and such that the new cut is valid for $ Q_2 $. Since $ \pi x \leq \pi_0 $ is already valid for $ Q_2^f (\widecheck{x}) $, the two previous conditions can be written:

$$\forall x \in \{x | x_1 = 1\}, ~ \gamma x - \gamma_0 = \pi x - \pi_0$$
$$\max_{x \in Q_2 ~|~ x_1 = 0} \gamma x \leq \gamma_0$$

Let $ e_i $ be the $ i^{\text{th}} $ vector of the canonical basis. Applying the first condition for $ x = e_1 $, we get $ \gamma_1 - \gamma_0 = \pi_1 - \pi_0 $. Moreover, by applying it in $ x = e_1 + e_i $ for all $ i> 1 $, we obtain $ \gamma_1 + \gamma_i - \gamma_0 = \pi_1 + \pi_i - \pi_0 $ which becomes $ \gamma_i = \pi_i $ by subtracting the previous equality. We therefore know:

\begin{subequations}
\begin{alignat*}{2}
& \forall i > 1, \gamma_i = \pi_i \\
& \gamma_1 = \gamma_0 + \pi_1 - \pi_0
\end{alignat*}
\end{subequations}

Thus, the value of $ \gamma_0 $  is the only element missing to know the value of all the $ \gamma_i $. By replacing $ \gamma_i $ by $ \pi_i $ for all $ i> 1 $ in the second condition we get $ \gamma_0 \geq \max_{x \in \{x \in Q_2 | x_1 = 0 \}} \pi x $. The smallest value of $ \gamma_0 $ satisfying the second condition is, therefore, $ \gamma_0 = \max_{x \in \{x \in Q_2 | x_1 = 0 \}} \pi x $ which can be computed with a call to the optimization oracle on $ Q_2 $ where the variable $ x_1 $ set to $ 0 $. It is therefore possible to compute each lifted coefficient using a single call to the oracle ($ O $).

\end{document}